\documentclass[11pt]{article}

\usepackage[a4paper,margin=1in]{geometry}
\usepackage{amsmath,amssymb,amsthm,mathtools}
\usepackage{booktabs}
\usepackage{enumitem}
\usepackage{microtype}
\usepackage[hidelinks]{hyperref}
\hypersetup{
  pdftitle={Cyclic Kronecker Fourier Transforms, Weber Norms, and Rogers--Ramanujan Singular Values},
  pdfauthor={Sumit Kumar Jha},
  pdfsubject={Kronecker limit formulas, generalized Weber functions, class fields, and Rogers--Ramanujan singular values},
  pdfkeywords={Rogers--Ramanujan continued fraction, Kronecker limit formula, Dedekind eta function, generalized Weber function, class field theory, Hecke L-value}
}

\newtheorem{theorem}{Theorem}[section]
\newtheorem{proposition}[theorem]{Proposition}
\newtheorem{lemma}[theorem]{Lemma}
\newtheorem{corollary}[theorem]{Corollary}
\theoremstyle{remark}
\newtheorem{remark}[theorem]{Remark}

\newcommand{\Cl}{\operatorname{Cl}}
\newcommand{\Gal}{\operatorname{Gal}}
\newcommand{\RR}{\mathbb{R}}
\newcommand{\QQ}{\mathbb{Q}}
\newcommand{\CC}{\mathbb{C}}

\title{Cyclic Kronecker Fourier Transforms, Weber Norms,\\
and Rogers--Ramanujan Singular Values}
\author{Sumit Kumar Jha\\
\small Greater Noida, Uttar Pradesh, India\\
\small \texttt{sumitkumarjha75@gmail.com}}
\date{}

\begin{document}
\maketitle

\begin{abstract}
Let $K$ be an imaginary quadratic field with fundamental discriminant
$D=-5n$, where $n>0$ is odd, and cyclic ideal class group
$\Cl(K)=\langle g\rangle\simeq C_{4M}$.  Assume that the ramified
prime class above $5$ is the unique involution $g^{2M}$.  For the
Kronecker eta invariant
\[
 F(A)=\frac{|\eta(z_A)|^2}{\sqrt{a_A}},
\]
where $z_A$ and $a_A$ arise from a positive definite quadratic form
representing the ideal class $A$, we show that the logarithms of the mirror ratios
\[
 R_j=\frac{F(g^{2M-j})}{F(g^j)},\qquad 0\le j<M,
\]
are related by an explicit discrete cosine transform to the odd
class-character Hecke $L(1)$-values.  A single level-$5$
generalized-Weber orbit realizes all of these mirrors.  Combining Fourier
nonvanishing, lower-power Weber class invariants and a ray-class cancellation
argument, we prove
\[
 \QQ(R_j)=H^{\langle c\rangle},\qquad
 R_j^{\sigma^{2M}}=R_j^{-1},\qquad
 \QQ(R_j+R_j^{-1})=H^{\langle c,\sigma^{2M}\rangle}.
\]
Here $H$ is the Hilbert class field, $c$ denotes complex conjugation,
and $\sigma$ corresponds to $g$ under the Artin map.  Consequently
\[
 [\QQ(R_j):\QQ]=4M,\qquad
 [\QQ(R_j+R_j^{-1}):\QQ]=2M,
\]
and the latter degree becomes $M$ over the real genus field
$\QQ(\sqrt5)$.  Thus the degree pattern $2^r,2^{r-1},2^{r-2}$ in the cyclic
$2$-power hierarchy is a specialization of a uniform $C_{4M}$ theorem.
We give exact low-class-number evaluations and certified computations at
$D=-895,-3295,-8495$, separating respectively four, eight and sixteen
non-genus Hecke $L(1)$-values and producing corresponding
alternating Rogers--Ramanujan singular values.  Classical mixed modular
equations then give ordinary Rogers--Ramanujan companions at the same
arguments.
\end{abstract}

\medskip
\noindent\textbf{2020 Mathematics Subject Classification.}
Primary 11F20, 11R37; Secondary 11F03, 11A55, 11G15, 11R42.

\smallskip
\noindent\textbf{Keywords.}
Rogers--Ramanujan continued fraction; Kronecker limit formula; Dedekind eta
function; generalized Weber function; class field theory; Hecke $L$-values;
finite Fourier transform.

\section{Introduction}

The Rogers--Ramanujan continued fraction
\[
R(q)=\cfrac{q^{1/5}}{1+\cfrac{q}{1+\cfrac{q^2}{1+\cdots}}}
\]
and its alternating companion
\[
S(q)=-R(-q)
\]
have a long history of singular-value evaluations at quadratic
arguments.  
\paragraph{Conventions.}
For $\tau$ in the upper half-plane, let $q=e^{2\pi i\tau}$ and
\[
\eta(\tau)=q^{1/24}\prod_{m\ge1}(1-q^m)
\]
be Dedekind's eta function.  We take the positive real fifth root when
$0<q<1$; in $R(-q)$ the fifth root of a negative real number is chosen
real.  Write $\QQ$, $\RR$, and $\CC$ for the rational, real, and complex
numbers, respectively.  The notation $C_m$ denotes a cyclic group of order
$m$.  For an imaginary quadratic field $K$ of fundamental discriminant
$D$, write $\Cl(K)$ for its ideal class group and
$h(D)=|\Cl(K)|$ for its class number.  If $H/K$ is the Hilbert class
field and $J\subseteq\Gal(H/\QQ)$ is a subgroup, $H^J$ denotes the
subfield fixed by $J$.  For $x\ge1$ we use
$\operatorname{arcosh}x=\log(x+\sqrt{x^2-1})$.

Ramanathan developed a method based on eta quotients,
ideal classes and Kronecker's limit formula~\cite{Ramanathan}.
Berndt, Chan and Zhang combined Kronecker-limit calculations, modular
equations and class field theory to evaluate further Ramanujan class
invariants~\cite{BCZ}.  For an account of the modular-function background, see
Duke~\cite{Duke}; recent icosahedral evaluations of ordinary
Rogers--Ramanujan singular values are given by Campbell~\cite{Campbell}.

In earlier work of the present author~\cite{JhaFour}, the one-class-per-genus case was
treated by finite Fourier inversion on an elementary $2$-class group.
When every genus consists of one class, every class-group character is
quadratic, and the relevant class-group $L$-values factor into products
of quadratic Dirichlet $L$-values.  The resulting eta-class ratios are
therefore explicit products of real quadratic fundamental units.

The purpose of the present paper is to study the complementary cyclic
setting
\[
\Cl(K)\simeq C_{4M},
\]
with the $2$-power hierarchy $C_4,C_8,C_{16},\ldots$ providing the cleanest
explicit tower.  Higher-order class-group characters now occur, and the
genus characters alone no longer isolate individual eta-class invariants.
Nevertheless the full Kronecker transform remains elementary finite Fourier
analysis.  The main observation is that the real symmetry
\[
F(A)=F(A^{-1})
\]
converts the odd character modes into a discrete cosine transform of a
small collection of mirrored eta-class ratios.

The resulting transform gives a natural hierarchy
\[
C_4,\quad C_8,\quad C_{16},\quad C_{32},\quad \ldots,
\]
in which one needs respectively $1,2,4,\ldots$ mirror ratios to recover
all odd non-genus modes.  The norm-$5$ mirror is exactly the quantity
appearing in Ramanathan's Rogers--Ramanujan eta quotient.  More strongly,
when the ramified prime above $5$ is the unique order-$2$ class, every mirror
is a norm of a singular value of the same level-$5$ generalized Weber
function.  Thus the entire cosine transform sits in one CM Weber orbit.
The explicit computations below emphasize the $2$-power tower, while the
class-field descent is proved uniformly for the full $C_{4M}$ setting.

\paragraph{Relation to earlier work and the new point.}
The ingredients entering this picture have substantial precedents, but they
have usually been used for different purposes.  Berndt--Chan--Zhang used
Kronecker's limit formula together with modular equations to evaluate
Ramanujan class invariants~\cite{BCZ}, while Gee--Honsbeek studied singular
values of the Rogers--Ramanujan continued fraction by complex multiplication
and Shimura reciprocity~\cite{GeeHonsbeek}.  Gee and, systematically,
Enge--Morain developed the class-invariant and Galois theory of modular
functions and generalized Weber functions~\cite{Gee,EngeMorain}; eta-quotient
generators of ring class fields have also been studied directly, for example
in~\cite{KooShinYoon}.  Finally, the passage between partial zeta data and
class-character $L$-values is of course finite Fourier analysis on the class
group.

The contribution here is therefore not the abstract existence of a Fourier
transform or of Weber class invariants.  It is the specific synthesis forced
by the ramified norm-$5$ involution: pairing $g^j$ with $g^{2M-j}$ kills the
even character modes and turns the odd part of the Kronecker transform into
an explicit cosine transform; the entire mirror vector is then realized by
one level-$5$ Weber orbit.  The ray-class descent further shows, for every
mirror in the standing $C_{4M}$ family, that the \emph{unpowered} positive
ratio itself generates $H^{\langle c\rangle}$ and that its reciprocal trace
generates $H^{\langle c,\sigma^{2M}\rangle}$.  The following theorems establish the mirror transform and the associated
field-generation statements.

\paragraph{Classical benchmarks and headline evaluations.}
We recall two classical evaluations of Ramanujan's continued fraction
for comparison.  With
\[
 \phi=\frac{1+\sqrt5}{2},
\]
one has~\cite{BCZ96}
\begin{equation}
 R(e^{-2\pi})=\sqrt{\frac{5+\sqrt5}{2}}-\phi,
 \qquad
 S(e^{-\pi})=\sqrt{\frac{5-\sqrt5}{2}}-(\phi-1).
\label{eq:classical-RS-benchmarks}
\end{equation}
A second classical value, especially suggestive for the norm-$5$ theme, is
\begin{equation}
 R(e^{-2\pi\sqrt5})
 =\frac{\sqrt5}
 {1+\left(5^{3/4}(\phi-1)^{5/2}-1\right)^{1/5}}-\phi.
\label{eq:classical-R-sqrt5}
\end{equation}
These formulas are recalled only as benchmarks; the new point here is that
cyclic class-group Fourier inversion produces similarly explicit
\emph{non-genus} $L$-values together with alternating Rogers--Ramanujan
singular values.

For $D=-155$ (class group $C_4$), define
\[
C_{31}
=
S(e^{-\pi/\sqrt{155}})^{-5}
+11
-
S(e^{-\pi/\sqrt{155}})^5.
\]
Then
\begin{equation}
C_{31}
=
625+280\sqrt5
-
50\sqrt{313+140\sqrt5}
\label{eq:intro-C155}
\end{equation}
and hence
\begin{equation}
S(e^{-\pi/\sqrt{155}})
=
\left(
\frac{11-C_{31}+\sqrt{(11-C_{31})^2+4}}{2}
\right)^{1/5}.
\label{eq:intro-S155}
\end{equation}
The associated order-$4$ class-character value has the particularly compact
form
\begin{equation}
L_{\QQ(\sqrt{-155})}(1,\chi)
=
\frac{2\pi}{3\sqrt{155}}
\operatorname{arcosh}(56+25\sqrt5).
\label{eq:intro-L155}
\end{equation}
For $D=-95$ (class group $C_8$), put
\[
 A_{19}=\frac{21+10\sqrt5+5\sqrt{31+14\sqrt5}}2,
 \qquad
 B_{19}=\frac{\sqrt5+\sqrt{2\sqrt5-1}}2.
\]
The positive branch of the eta quotient gives
\[
 C_{19}
 =\frac{5\sqrt5}{2}
 \left(A_{19}-\sqrt{A_{19}^2-4}\right),
\]
and the corresponding alternating value is
\begin{equation}
 S(e^{-\pi/\sqrt{95}})
 =\left(
 \frac{11-C_{19}+\sqrt{(11-C_{19})^2+4}}2
 \right)^{1/5}.
\label{eq:intro-S95}
\end{equation}
Choose a generator $g$ of the class group $C_8$, and for $k=1,3$ let
$\chi_k(g)=e^{2\pi i k/8}$.  The non-genus character values satisfy
\begin{equation}
L_K(1,\chi_k)
=\frac{2\pi}{\sqrt{95}}
\left
\{
 \frac13\operatorname{arcosh}\!\frac{A_{19}}2
 \ \pm\ \sqrt2\,\operatorname{arcosh}\!\frac{B_{19}}2
\right\},
\label{eq:intro-L95-pair}
\end{equation}
where the plus sign corresponds to $k=1$ and the minus sign to $k=3$.

At the next level, $D=-895$ has cyclic class group $C_{16}$.  One of
the four mirror ratios is
\[
R_3=\frac{F(g^5)}{F(g^3)}.
\]
Writing $Y_3=R_3+R_3^{-1}$, one of the four certified trace equations is
the compact quartic
\begin{equation}
Y_3^4
+\frac{11-\sqrt5}{2}Y_3^3
+\frac{95-43\sqrt5}{2}Y_3^2
+\frac{-181+59\sqrt5}{2}Y_3
+\frac{123-55\sqrt5}{2}
=0.
\label{eq:intro-D895-preview}
\end{equation}
The cases $D=-155$ and $D=-95$ are evaluated in
Sections~\ref{sec:C4} and~\ref{sec:C8}.  Section~\ref{sec:D895-certificate}
treats all four mirror ratios at $D=-895$ using one level-$5$ system.

\section{Kronecker eta invariants and the full class-group transform}

Let $D<0$ be a fundamental discriminant and
\[
K=\QQ(\sqrt D).
\]
A primitive positive definite binary quadratic form $[a,b,c]$ has
discriminant $b^2-4ac=D$.  Its proper equivalence class represents an
ideal class $A\in\Cl(K)$.  For such a representative, set
\[
z_A=\frac{-b+\sqrt D}{2a},
\qquad
F(A)=\frac{|\eta(z_A)|^2}{\sqrt a},
\]
where $\sqrt D=i\sqrt{|D|}$ has positive imaginary part.  The
transformation law of $\eta$ makes $F(A)$ independent of the
representative.  Conjugation of ideal classes gives $F(A)=F(A^{-1})$.

For a character $\chi$ of $\Cl(K)$, let $L_K(s,\chi)$ denote the
unramified Hecke $L$-function determined in $\Re(s)>1$ by
\[
L_K(s,\chi)=\sum_{\mathfrak a\subseteq\mathcal O_K,\,\mathfrak a\ne0}
\frac{\chi([\mathfrak a])}{(\mathrm N\mathfrak a)^s},
\]
where the sum runs over nonzero integral ideals of the ring of integers
$\mathcal O_K$ and $\mathrm N\mathfrak a$ is the absolute norm.
For a nonprincipal class-group character $\chi$, put
\[
\Lambda_\chi
=
\frac{\sqrt{|D|}}{2\pi}L_K(1,\chi).
\]
For every nonprincipal class-group character, the character form of
Kronecker's limit formula gives~\cite{BCZ}
\begin{equation}
\Lambda_\chi
=
-\sum_{A\in\Cl(K)}\chi(A)\log F(A).
\label{eq:KLF}
\end{equation}

\begin{proposition}[Full Fourier extraction]
Let $A_0$ be the principal class and let $C$ be any class.  Then
\[
\log\frac{F(A_0)}{F(C)}
=
-\frac1{h(D)}
\sum_{\chi\neq 1}
\left(1-\overline{\chi(C)}\right)\Lambda_\chi.
\]
\end{proposition}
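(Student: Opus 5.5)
Finite Fourier inversion on $\Cl(K)$ applied to the function $A\mapsto \log F(A)$, with the Kronecker limit formula \eqref{eq:KLF} supplying the nonprincipal Fourier coefficients, should give the identity directly. Put $f(A)=\log F(A)$ and, for each character $\chi$, $\hat f(\chi)=\sum_{A}\chi(A)f(A)$. Orthogonality of characters of the finite abelian group $\Cl(K)$ gives
\[
f(C)=\frac1{h(D)}\sum_{\chi}\overline{\chi(C)}\,\hat f(\chi)
\]
for every class $C$, where the sum runs over all characters, including the trivial one. By \eqref{eq:KLF}, $\hat f(\chi)=-\Lambda_\chi$ for every $\chi\ne1$. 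The trivial coefficient $\hat f(1)=\sum_A\log F(A)$ is a different quantity, essentially the constant term of the Dedekind zeta function of $K$ at $s=1$, and I do not want to evaluate it.

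The plan is to remove $\hat f(1)$ by taking a difference. Apply inversion at $C$ and at $A_0$ and subtract. The trivial character contributes $\frac1{h(D)}\hat f(1)$ to both, so it cancels, and what remains is
\[
f(A_0)-f(C)=\frac1{h(D)}\sum_{\chi\ne1}\bigl(\overline{\chi(A_0)}-\overline{\chi(C)}\bigr)\hat f(\chi).
\]
Since $\chi(A_0)=1$, substituting $\hat f(\chi)=-\Lambda_\chi$ gives exactly the stated formula
\[
\log\frac{F(A_0)}{F(C)}=-\frac1{h(D)}\sum_{\chi\ne1}\bigl(1-\overline{\chi(C)}\bigr)\Lambda_\chi .
\]

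The only point that needs care is the trivial character. Equation \eqref{eq:KLF} holds only for nonprincipal $\chi$, so $\hat f(1)$ has no expression in terms of $L$-values, and the argument works only because $\hat f(1)$ enters both inversions with the same coefficient. Two routine checks remain: $F(A)$ is well defined and positive, so $\log F(A)$ is a real-valued function on $\Cl(K)$; and the convention in \eqref{eq:KLF}, with $\chi(A)$ rather than $\overline{\chi(A)}$, matches the inversion formula, which is why $\overline{\chi(C)}$ appears in the result.
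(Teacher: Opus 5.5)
Your proof is correct and is essentially the paper's argument. The paper writes the orthogonality relation $\delta_{A,A_0}-\delta_{A,C}=\frac1{h(D)}\sum_{\chi}(1-\overline{\chi(C)})\chi(A)$, multiplies by $\log F(A)$ and sums over $A$; this is your two-point Fourier inversion followed by subtraction, with the principal character dropping out because its coefficient $1-\overline{1}$ vanishes and \eqref{eq:KLF} supplying the remaining coefficients.
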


\begin{proof}
Here $\delta_{A,B}$ denotes the Kronecker delta, equal to $1$ when
$A=B$ and to $0$ otherwise.  Apply character
orthogonality to
\[
\delta_{A,A_0}-\delta_{A,C}
=
\frac1{h(D)}
\sum_{\chi}
\left(1-\overline{\chi(C)}\right)\chi(A),
\]
multiply by $\log F(A)$ and sum over $A$.  The principal character
drops out and \eqref{eq:KLF} gives the result.
\end{proof}

When $C$ has order $2$, every character takes the value $\pm1$ on $C$,
and the formula simplifies to
\[
\log\frac{F(A_0)}{F(C)}
=
-\frac{2}{h(D)}
\sum_{\chi(C)=-1}\Lambda_\chi.
\]

\section{The cyclic Kronecker cosine transform}

Assume now
\[
G=\Cl(K)=\langle g\rangle\simeq C_{4M},
\qquad M\ge1,
\]
and write
\[
f_j=\log F(g^j),\qquad 0\le j<4M.
\]
Since $F(A)=F(A^{-1})$,
\[
f_j=f_{4M-j}.
\]

For odd $k$ with $1\le k<2M$, let
\[
\chi_k(g)=e^{2\pi i k/(4M)}
\]
and write $\Lambda_k=\Lambda_{\chi_k}$.  Since $F(A)=F(A^{-1})$,
pairing $A$ with $A^{-1}$ in \eqref{eq:KLF} gives
\[
 \Lambda_{\bar\chi}=\Lambda_\chi\in\RR.
\]
In particular $\Lambda_{4M-k}=\Lambda_k$, a fact used in the cosine
reduction below.

Define the mirror differences
\[
d_j=f_{2M-j}-f_j,
\qquad 0\le j<M.
\]

\begin{lemma}[Odd cosine orthogonality]
Let $\delta_{k,\ell}$ be $1$ when $k=\ell$ and $0$ otherwise.
For odd integers $k,\ell$ with $1\le k,\ell<2M$,
\[
1+2\sum_{j=1}^{M-1}
\cos\frac{k\pi j}{2M}
\cos\frac{\ell\pi j}{2M}
=
M\delta_{k,\ell}.
\]
Equivalently, the matrix
\[
\left(\cos\frac{k\pi j}{2M}\right)_
{\substack{0\le j<M\\1\le k<2M,\ k\ {\rm odd}}}
\]
is invertible with the normalization used below.
\end{lemma}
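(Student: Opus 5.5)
The plan is to reduce the left-hand side to two geometric sums of roots of unity via the product-to-sum formula, and then use the parity of $k\pm\ell$ to decide which of them survive. First I apply
\[
2\cos\alpha\cos\beta=\cos(\alpha-\beta)+\cos(\alpha+\beta).
\]
Since $k,\ell$ are both odd, $k-\ell=2m$ and $k+\ell=2p$ for integers $m,p$. Writing
\[
S(r)=\sum_{j=0}^{M-1}\cos\frac{r\pi j}{M}
\]
and separating the $j=0$ terms, the left-hand side becomes
\[
1+\bigl(S(m)-1\bigr)+\bigl(S(p)-1\bigr)=S(m)+S(p)-1 .
\]

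Next I evaluate $S(r)$ in closed form. If $2M\mid r$, every term equals $1$, so $S(r)=M$. Otherwise put $\omega=e^{i\pi r/M}\neq1$. Since $\omega^M=(-1)^r$, we have $S(r)=\Re\,\frac{1-(-1)^r}{1-\omega}$, which gives two cases:
\begin{itemize}
\item For $r$ even (and $2M\nmid r$), this is $0$.
\item For $r$ odd, it is $\Re\,\frac{2}{1-e^{i\theta}}$ with $\theta=\pi r/M$. The identity $\frac{1}{1-e^{i\theta}}=\frac12+\frac i2\cot\frac\theta2$ shows the real part is $1$.
\end{itemize}
So $S(r)\in\{M,0,1\}$ according as $2M\mid r$, $r$ is even but not divisible by $2M$, or $r$ is odd.

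The key step is the bookkeeping of $m$ and $p$. Because $m+p=k$ is odd, exactly one of $m,p$ is odd, and that one contributes $S=1$. The ranges $1\le k,\ell<2M$ give $|m|<M$ and $1\le p\le 2M-1$. Hence $p$ is never divisible by $2M$, and $m$ is divisible by $2M$ exactly when $m=0$, i.e.\ $k=\ell$. I then split into two cases:
\begin{itemize}
\item If $k=\ell$, then $m=0$ gives $S(m)=M$ and $p=k$ is odd, giving $S(p)=1$, so the total is $M+1-1=M$.
\item If $k\neq\ell$, the even one of $m,p$ is nonzero mod $2M$ and contributes $0$, while the odd one contributes $1$, so the total is $0+1-1=0$.
\end{itemize}
This is the only place any care is needed: the range restrictions are exactly what rule out spurious $2M\mid p$ or $2M\mid m$ coincidences.

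Finally, the invertibility statement follows directly. There are exactly $M$ odd $k$ in $[1,2M)$, so $C=\bigl(\cos\frac{k\pi j}{2M}\bigr)$ is a square $M\times M$ matrix. With $W=\operatorname{diag}(1,2,\dots,2)$ indexed by $j$, the identity just proved reads $C^{T}WC=M\,I$. Hence $C$ is invertible with $C^{-1}=M^{-1}C^{T}W$, which is the normalization to be used below.
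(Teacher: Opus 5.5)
Your proof is correct and follows essentially the paper's route: product-to-sum, evaluation of the resulting geometric sums, and the parity of $k\pm\ell$, with the range restrictions excluding every case $2M\mid(k\pm\ell)/2$ other than $k=\ell$. Your bookkeeping over the half range $0\le j<M$, with $S(r)\in\{M,0,1\}$ and the odd one of $m,p$ contributing the $1$ that cancels the leading term, is more precise than the paper's one-line claim that ``the sums vanish unless $k=\ell$,'' which is literally true only after extending the sum to the full period $0\le j<2M$ via $j\mapsto 2M-j$.
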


\begin{proof}
Use $2\cos x\cos y=\cos(x-y)+\cos(x+y)$ and sum the resulting
finite geometric series.  Since $k$ and $\ell$ are odd, the possible
frequencies are even multiples of $\pi/(2M)$; the sums vanish unless
$k=\ell$.
\end{proof}

\begin{theorem}[Cyclic Kronecker cosine transform]\label{thm:cyclic-cosine}
For $0\le j<M$,
\begin{equation}
d_j=
\frac1M
\sum_{\substack{1\le k<2M\\ k\ {\rm odd}}}
\Lambda_k
\cos\frac{k\pi j}{2M}.
\label{eq:forwardDCT}
\end{equation}
Conversely, for every odd $k$ with $1\le k<2M$,
\begin{equation}
\Lambda_k=
d_0+
2\sum_{j=1}^{M-1}
d_j\cos\frac{k\pi j}{2M}.
\label{eq:inverseDCT}
\end{equation}
\end{theorem}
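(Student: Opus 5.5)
We first prove the inverse formula \eqref{eq:inverseDCT} directly from the character form \eqref{eq:KLF} of Kronecker's limit formula. Then we obtain \eqref{eq:forwardDCT} by inverting it with the odd cosine orthogonality lemma.

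For odd $k$, \eqref{eq:KLF} reads
\[
\Lambda_k=-\sum_{m=0}^{4M-1}\zeta^{km}f_m,\qquad \zeta=e^{2\pi i/(4M)}.
\]
We would exploit two symmetries of the index set $\mathbb{Z}/4M$.

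\emph{Shift by $2M$.} Since $k$ is odd, $\zeta^{k(m+2M)}=-\zeta^{km}$, so the sum collapses to
\[
\Lambda_k=-\sum_{m=0}^{2M-1}\zeta^{km}\bigl(f_m-f_{m+2M}\bigr).
\]

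\emph{Inversion.} The relation $f_{m+2M}=f_{4M-(m+2M)}=f_{2M-m}$ turns each bracket into $f_m-f_{2M-m}$. Put $e_m=f_{2M-m}-f_m$ for $0\le m\le 2M$. Then $e_{2M-m}=-e_m$, and in particular $e_M=0$. We get $\Lambda_k=\sum_{m=0}^{2M-1}\zeta^{km}e_m$.

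Next we pair $m$ with $2M-m$ for $1\le m<M$. Using $\zeta^{k(2M-m)}=-\zeta^{-km}$ and $e_{2M-m}=-e_m$, the pair contributes $(\zeta^{km}+\zeta^{-km})e_m=2\cos\frac{k\pi m}{2M}\,e_m$. The term $m=0$ contributes $e_0=d_0$, and the term $m=M$ vanishes. Since $e_j=d_j$ for $0\le j<M$, this gives \eqref{eq:inverseDCT}. The fact that $\Lambda_k$ is real, noted above, serves as a consistency check.

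For the forward direction, multiply \eqref{eq:inverseDCT} by $\cos\frac{k\pi j}{2M}$ and sum over odd $k\in[1,2M)$. This calls for the dual orthogonality relation
\[
\sum_{k\ \mathrm{odd}}\cos\frac{k\pi j}{2M}\cos\frac{k\pi j'}{2M}=\frac{M}{2}\,\epsilon_j\,\delta_{j,j'},\qquad 0\le j,j'<M,
\]
where $\epsilon_0=2$ and $\epsilon_j=1$ for $j\ge1$.

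One route is to deduce this from the Lemma. The Lemma says $C^{T}WC=MI$, where $C$ is the square $M\times M$ matrix (rows $j$, columns odd $k$) and $W=\mathrm{diag}(1,2,\dots,2)$. Hence $C^{-1}=M^{-1}C^TW$, so $CC^T=MW^{-1}$, which is exactly the dual relation. Alternatively, one can verify it directly by product-to-sum and the vanishing of $\sum_{k\ \mathrm{odd}}\cos\frac{k\pi r}{2M}$ for $r\not\equiv0 \pmod{4M}$.

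Applying the dual relation, the $d_0$ term gives $d_0\cdot\frac{M}{2}\cdot 2=Md_0$ when $j=0$. For $j\ge1$, the coefficient $2d_j$ gives $2d_j\cdot\frac{M}{2}=Md_j$. Dividing by $M$ yields \eqref{eq:forwardDCT}.

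The only real obstacle is bookkeeping. One must make sure the matrix in the Lemma is square ($M$ odd $k$'s versus $M$ indices $j$) so that a one-sided inverse is two-sided. One must also track the weight $2$ versus $1$ at $j=0$ correctly. Everything else is routine manipulation of roots of unity.
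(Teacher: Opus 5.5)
Your proof is correct, but it runs in the opposite direction from the paper's.

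\textbf{The paper's route.} It starts from full Fourier inversion of \eqref{eq:KLF}, writing $f_{2M-j}-f_j$ as a combination of all nonprincipal $\Lambda_\ell$. The even modes cancel after pairing $\ell$ with $4M-\ell$, and the odd modes combine into cosines using $\Lambda_{4M-\ell}=\Lambda_\ell$. This yields \eqref{eq:forwardDCT} first, and the orthogonality lemma then gives \eqref{eq:inverseDCT}.

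\textbf{Your route.} You evaluate the Kronecker sum for a single odd $k$ directly, using the half-turn sign $\zeta^{2Mk}=-1$ and the reflection $f_m=f_{4M-m}$. This produces \eqref{eq:inverseDCT} as an exact identity with no orthogonality and no appeal to $\Lambda_{4M-k}=\Lambda_k$. It also avoids the principal character, which \eqref{eq:KLF} does not cover and which the paper must cancel inside a difference. It is essentially the computation the paper performs later in \eqref{eq:w-Fourier-v12}.

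\textbf{Trade-off.} The paper's ordering uses the lemma exactly as stated: $d=M^{-1}C\Lambda$ together with $C^{T}WC=MI$ gives $C^{T}Wd=\Lambda$ at once. Your ordering instead needs the dual relation $CC^{T}=MW^{-1}$, which you correctly derive from the squareness of $C$. Both arguments are complete.

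\textbf{One small correction.} This concerns your alternative direct verification of the dual relation. The sum $\sum_{k\ \mathrm{odd}}\cos\frac{k\pi r}{2M}$ does not vanish for every $r\not\equiv0\pmod{4M}$. When $r\equiv 2M\pmod{4M}$, every term equals $-1$ and the sum is $-M$. The correct condition is $r\not\equiv0\pmod{2M}$. That weaker condition is all you need, because the nonzero values of $|j\pm j'|$ with $0\le j,j'<M$ all lie in $[1,2M-2]$.
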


\begin{proof}
With the convention in \eqref{eq:KLF}, Fourier inversion gives
\[
f_t-f_s
=
-\frac1{4M}
\sum_{\ell\ne0}
\Lambda_\ell
\left(
e^{-2\pi i\ell t/(4M)}
-
e^{-2\pi i\ell s/(4M)}
\right);
\]
the principal-character term cancels in the difference.
Take $t=2M-j$ and $s=j$.  Since
\[
e^{-2\pi i\ell(2M-j)/(4M)}
=(-1)^\ell e^{2\pi i\ell j/(4M)},
\]
the terms with even $\ell$ cancel after pairing $\ell$ with
$4M-\ell$.  For odd $\ell$, the paired contribution is a cosine.
Using $\Lambda_\ell=\Lambda_{4M-\ell}$ gives
\[
d_j=
\frac1M
\sum_{\substack{1\le k<2M\\k\ {\rm odd}}}
\Lambda_k\cos\frac{k\pi j}{2M},
\]
which is \eqref{eq:forwardDCT}.  The preceding orthogonality lemma
gives \eqref{eq:inverseDCT}.
\end{proof}

\begin{corollary}[Dirichlet-kernel form]
Put
\[
e_m=d_m-d_{m+1},
\qquad 0\le m<M,
\qquad d_M=0.
\]
Then
\[
\Lambda_k=
\sum_{m=0}^{M-1}
\frac{
\sin\!\left((2m+1)k\pi/(4M)\right)
}{
\sin\!\left(k\pi/(4M)\right)
}
\,e_m.
\]
\end{corollary}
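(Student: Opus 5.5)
The plan is a summation by parts (Abel summation) applied to the inverse transform \eqref{eq:inverseDCT} of Theorem~\ref{thm:cyclic-cosine}. First rewrite that formula as a single sum with weights:
\[
\Lambda_k=\sum_{j=0}^{M-1} c_j d_j,\qquad c_0=1,\quad c_j=2\cos\frac{k\pi j}{2M}\ (j\ge1).
\]
The statement is linear in the $d_j$, so it suffices to identify the weights. Put $D_m=\sum_{j=0}^{m}c_j$. Then the convention $d_M=0$ and Abel summation give
\[
\Lambda_k=\sum_{m=0}^{M-1}D_m\,(d_m-d_{m+1})=\sum_{m=0}^{M-1}D_m e_m .
\]
This step is immediate. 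One checks it by expanding $\sum_m D_m(d_m-d_{m+1})$: the coefficient of $d_j$ is $D_j-D_{j-1}=c_j$ (with $D_{-1}=0$), and the boundary term vanishes because $d_M=0$.

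It then remains to show that $D_m$ is the Dirichlet kernel. With $\theta=k\pi/(2M)$, we have $D_m=1+2\sum_{j=1}^m\cos(j\theta)$. The classical identity
\[
1+2\sum_{j=1}^m\cos j\theta=\frac{\sin\bigl((2m+1)\theta/2\bigr)}{\sin(\theta/2)}
\]
comes from the telescoping relation $2\sin(\theta/2)\cos(j\theta)=\sin((j+\tfrac12)\theta)-\sin((j-\tfrac12)\theta)$. Substituting $\theta/2=k\pi/(4M)$ yields exactly the stated kernel $\sin((2m+1)k\pi/(4M))/\sin(k\pi/(4M))$.

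The only point requiring care, and the nearest thing to an obstacle, is that the denominator does not vanish. Since $k$ is odd with $1\le k<2M$, we have $0<k\pi/(4M)<\pi/2$, so $\sin(k\pi/(4M))>0$. With that checked, the telescoping division is legitimate and the corollary follows.
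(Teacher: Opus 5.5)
Your proposal is correct and matches the paper's proof. The paper substitutes $d_j=\sum_{m=j}^{M-1}e_m$ into \eqref{eq:inverseDCT} and interchanges the sums, which is your Abel summation, and then identifies the partial sums $1+2\sum_{j=1}^{m}\cos(j\theta)$ with the Dirichlet kernel; your check that $\sin(k\pi/(4M))>0$ for odd $1\le k<2M$ is a detail the paper leaves implicit.
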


\begin{proof}
Since
\[
d_j=\sum_{m=j}^{M-1}e_m,
\]
substitute into \eqref{eq:inverseDCT}.  The bracket
\[
1+2\sum_{j=1}^{m}\cos(j\theta)
\]
is the Dirichlet kernel
\[
\frac{\sin((2m+1)\theta/2)}{\sin(\theta/2)}.
\]
\end{proof}

\begin{theorem}[Main arithmetic theorem]\label{thm:main}\label{thm:unpowered-descent-v14}
Let $D<0$ be a fundamental discriminant with $5\mid D$ and $D/5$ odd, and
suppose
\[
 \Cl(K)=\langle g\rangle\simeq C_{4M},\qquad M\ge1.
\]
Assume that the ramified prime class $\mathfrak p$ above $5$ is the unique
involution $g^{2M}$.  For $0\le j<M$ put
\[
 R_j=\frac{F(g^{2M-j})}{F(g^j)},\qquad
 Y_j=R_j+R_j^{-1}.
\]
Let $H$ be the Hilbert class field, write
$\Gal(H/K)=\langle\sigma\rangle$ with $\sigma$ corresponding to $g$, and
let $c$ be complex conjugation.  Then
\begin{equation}\label{eq:unpowered-generator-v14}
 \QQ(R_j)=H^{\langle c\rangle},
\end{equation}
\begin{equation}\label{eq:R-halfturn-v14}
 R_j^{\sigma^{2M}}=R_j^{-1},
\end{equation}
and
\begin{equation}\label{eq:unpowered-trace-v14}
 \QQ(Y_j)=H^{\langle c,\sigma^{2M}\rangle}.
\end{equation}
Consequently
\begin{equation}\label{eq:exact-degrees-v14}
 [\QQ(R_j):\QQ]=4M,\qquad
 [\QQ(Y_j):\QQ]=2M,\qquad
 [\QQ(\sqrt5,Y_j):\QQ(\sqrt5)]=M.
\end{equation}
When $4M=2^r$, these are $2^r,2^{r-1},2^{r-2}$.
\end{theorem}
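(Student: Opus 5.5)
The plan has three parts. First, realize each $R_j$ as an algebraic number in $H$ with an explicit Galois action. Then use Fourier nonvanishing to pin down its stabilizer. Finally, descend to the trace $Y_j$.

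\textbf{Realizing $R_j$ as a Weber value.} Since $\mathfrak p=g^{2M}$, we have $g^{2M-j}=\mathfrak p\,g^{-j}$, and $F(g^{-j})=F(g^j)$. Hence
\[
R_j=\frac{F(\mathfrak p\,g^{-j})}{F(g^{-j})}.
\]
Choose a form $[a,b,c]$ for $g^{-j}$ with $5\mid a$ and $5\mid b$, which is possible because $5\mid D$. Then $z_{\mathfrak p A}$ is related to $z_A/5$ or $5z_A$, up to $\mathrm{SL}_2(\mathbb Z)$, and
\[
R_j=\sqrt5^{\pm1}\left|\eta(5\tau)/\eta(\tau)\right|^{\pm2}
\]
at $\tau=z_{g^{-j}}$. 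This is the absolute value squared, i.e.\ the norm from $K$ down to a real field, of a singular value of the level-$5$ generalized Weber function $\mathfrak w_5=\eta(\tau/5)/\eta(\tau)$ or its reciprocal.

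By the Enge--Morain / Gee class-invariant theory, which applies because $5\mid D$ and $D/5$ is odd, a suitable power $\mathfrak w_5^{e}$ (with $e\mid 6$, a "lower power") at $\tau$ lies in $H$. By Shimura reciprocity, $\sigma_B$ sends the value at $A$ to the value at $AB^{-1}$. Taking absolute values gives $R_j^{e}\in H\cap\RR$ with
\[
(R_j^{e})^{\sigma_B}=\bigl(F(\mathfrak pAB^{-1})/F(AB^{-1})\bigr)^{e}.
\]
To remove the power $e$, I would use the ray-class cancellation. The unpowered $\mathfrak w_5$ at $\tau$ lies in a ray class field of conductor dividing $5$ (or $60$), and in the ratio $F(\mathfrak pA)/F(A)$ the roots of unity and ray-class twists cancel between numerator and denominator. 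Hence $R_j$ itself is fixed by $\Gal(H_{\mathfrak f}/H)$, so $R_j\in H$ and the same Galois formula holds without powers. This gives $R_j\in H^{\langle c\rangle}$, since $R_j$ is real. Applying $\sigma^{2M}$, which corresponds to $\mathfrak p$ with $\mathfrak p^2=1$, swaps numerator and denominator, which proves \eqref{eq:R-halfturn-v14}.

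\textbf{Stabilizer of $R_j$.} To obtain \eqref{eq:unpowered-generator-v14}, I will show that the conjugates $R_j^{\sigma^t}=F(g^{2M-j-t})/F(g^{-j-t})$ are pairwise distinct for $0\le t<4M$. Since $[H^{\langle c\rangle}:\QQ]=4M$, this forces equality. Their logarithms are $f_{2M-j-t}-f_{-j-t}$. Fourier expansion as in the proof of Theorem~\ref{thm:cyclic-cosine} expresses this vector, as a function of $t$, as
\[
\frac1{2M}\sum_{\ell\ \mathrm{odd}}\Lambda_\ell\, e^{2\pi i\ell(t+j)/(4M)}
\]
up to normalization. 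Suppose $t\mapsto$ this were invariant under a shift $s\ne0$. Then every odd $\ell$ with $\Lambda_\ell\neq0$ would satisfy $e^{2\pi i\ell s/(4M)}=1$, forcing $4M\mid s$. This uses nonvanishing of $L_K(1,\chi_\ell)$ for nonprincipal $\chi_\ell$, which holds because $\chi_\ell$ is a nontrivial Hecke character, so $L_K(1,\chi_\ell)\neq0$. The key point is that odd $\ell$ are coprime to $2$, so no nontrivial shift fixes all odd modes. Indeed, $\ell=1$ alone already forces $s\equiv0$. Hence the $4M$ conjugates are distinct.

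\textbf{The trace $Y_j$.} Now $Y_j$ is fixed by $\langle c,\sigma^{2M}\rangle$, a group of order $4$. The conjugates $Y_j^{\sigma^t}$ for $0\le t<2M$ are distinct: if $Y^{\sigma^t}=Y$, then $R^{\sigma^t}\in\{R,R^{-1}\}=\{R,R^{\sigma^{2M}}\}$, so $t\in\{0,2M\}$ modulo $4M$ by the previous step. Thus the stabilizer has order $4$ and $[\QQ(Y_j):\QQ]=2M$. Since $\QQ(\sqrt5)$ is the genus field fixed by $\sigma^2$ together with $c$, and $\sigma^{2M}\in\langle\sigma^2\rangle$ (requiring $M\ge1$), we get $\sqrt5\in H^{\langle c,\sigma^{2M}\rangle}$. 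The relative degree is then $M$.

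\textbf{Expected obstacle.} I expect the main difficulty to be the unpowered descent, namely showing $R_j\in H$ rather than only $R_j^{e}\in H$. This requires tracking Shimura reciprocity on the full ray class group of conductor $5$ (and possibly $3$ and $8$) and checking that the idelic action on the quotient defining $F(\mathfrak pA)/F(A)$ is trivial. I would first try the explicit $\eta$-quotient $\mathrm{SL}_2$-transformation with root-of-unity multipliers, and verify that these multipliers cancel.
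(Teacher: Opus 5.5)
Your overall structure (Weber realization, Fourier nonvanishing for the stabilizer, reciprocal-trace descent, genus field) matches the paper, and your final step for $Y_j$ is fine. However, the Galois action you feed into it is wrong. Conjugates of an absolute value are not absolute values of conjugates. The group $\Gal(H/\QQ)$ is dihedral with $c\sigma^t=\sigma^{-t}c$, so for $W\in H$ one has $(W\overline W)^{\sigma^t}=W^{\sigma^t}\,\overline{W^{\sigma^{-t}}}$, not $|W^{\sigma^t}|^2$. Hence your formula $R_j^{\sigma^t}=F(g^{2M-j-t})/F(g^{-j-t})$ is false. It already fails at $j=M$, where $R_M=1$ yet the formula yields nontrivial conjugates. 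It would also make $\QQ(R_j)$ totally real, whereas $H^{\langle c\rangle}$ has only two real embeddings. Put $w_t=f_{t+2M}-f_t$. The correct statement, once $R_j\in H$, is $\log|R_j^{\sigma^t}|=\tfrac12(w_{j+t}+w_{j-t})$, whose first Fourier coefficient is $2\Lambda_1\cos(2\pi j/(4M))$. This is nonzero precisely because $0\le j<M$, a hypothesis your argument never uses. (The paper runs this computation for $R_j^{24}$ via the Murty--Murty invariant and then transfers primitivity to $R_j^6$ and $R_j^2$.) For the same reason the half-turn does not follow by ``swapping numerator and denominator''. The correct action only shows that $R_j^{\sigma^{2M}}$ is real of absolute value $R_j^{-1}$, i.e.\ $R_j^{\sigma^{2M}}=\pm R_j^{-1}$, and this sign decides whether $[\QQ(Y_j):\QQ]$ is $2M$ or $4M$. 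Positivity cannot settle it, because you have no representation $R_j=W\overline W$ with $W\in H$.

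The unpowered descent, which you rightly flag as the obstacle, is left without a mechanism, and the one you sketch cannot work as stated. $F(\mathfrak pA)$ and $F(A)$ are not separately singular values in a ray class field. Moreover, $R_j=|\mathfrak w_5(\tau)|^2/\sqrt5$ pairs one Weber value with its complex conjugate, and you have no control over how $\Gal(L/H)$ acts on the latter. The missing idea is to convert the absolute value into a holomorphic quotient. The Fricke identity $\mathfrak w_5(-5/z)=\sqrt5/\mathfrak w_5(z)$, together with $\mathfrak w_5(-\overline z)=\overline{\mathfrak w_5(z)}$, gives $R_j=a/b$ with $a=\mathfrak w_5(\tau)$ and $b=\mathfrak w_5(W_5(-\overline\tau))$. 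Here $\tau$ is the CM point of a form $[A,B,C]$ for $g^j$, and $W_5(-\overline\tau)$ is that of $[C/5,B,5A]$, of class $g^{2M-j}$; both lie in a single admissible $15$-system, where exponent $2$ is available because $3\nmid D$. Thus $a^2,b^2\in H$ attach to $g^j$ and $g^{2M-j}$. A lift $\widetilde\sigma$ of $\sigma^{2M-2j}$ to the ray class field $L$ of modulus $30$ then sends $a$ to $\pm b$. Commutativity of $\Gal(L/K)$ forces each $\rho\in\Gal(L/H)$ to multiply $a$ and $b$ by the same sign, so $a/b\in H$. The same device fixes the half-turn sign. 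A lift of $\sigma^{2M}$ sends $a\mapsto\varepsilon\sqrt5/a$. Since $\widetilde\sigma$ restricts to an even power of $\sigma$, it fixes $\sqrt5$, so the lift also sends $b\mapsto\varepsilon\sqrt5/b$ with the same $\varepsilon$. This gives $R_j^{\sigma^{2M}}=b/a=R_j^{-1}$.
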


The proof occupies the remainder of Section~\ref{sec:norm5-weber}: the
cosine transform supplies primitivity, generalized Weber functions place a
lower power in $H$, and a ray-class cancellation removes the final square
root.

\section{The Rogers--Ramanujan norm-5 mirror and proof of the main theorem}
\label{sec:norm5-weber}

\begin{lemma}[Shape of the discriminant]\label{lem:shape-D}
Under the hypotheses of Theorem~\ref{thm:main}, there is a prime
$\ell\equiv3\pmod4$, $\ell\ne3$, such that
\[
 D=-5\ell.
\]
In particular $3\nmid D$, and the real genus field contained in $H$ is
$\QQ(\sqrt5)$.
\end{lemma}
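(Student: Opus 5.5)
Genus theory drives the whole argument: the $2$-rank of $\Cl(K)$ equals $t-1$, where $t$ is the number of distinct prime discriminants dividing $D$. Since $\Cl(K)\simeq C_{4M}$ is cyclic of even order, its $2$-rank is exactly $1$, so $t=2$. Because $5\mid D$ and $D/5$ is odd, the prime discriminant $5$ divides $D$, and $D/5$ is itself an odd fundamental discriminant with exactly one prime discriminant factor. Since $D<0$, this factor must be negative, so $D/5=-\ell$ with $\ell$ a prime and $-\ell\equiv1\pmod4$, that is, $\ell\equiv3\pmod4$. The value $-4$ is excluded because $D/5$ is odd, and $\pm8$ are excluded for the same reason. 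This gives $D=-5\ell$.

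Next I rule out $\ell=3$. Here $D=-15$ and $h(-15)=2$, so $\Cl(K)\simeq C_2$, which is not of the form $C_{4M}$. Hence $\ell\neq3$, and since $\ell$ is a prime different from $3$ and $5\ne 3$, it follows that $3\nmid D$. For safety I will also check that $\ell\neq5$: otherwise $D=-25$, which is not fundamental, and this is also incompatible with $D/5$ being a prime discriminant coprime to $5$.

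For the genus field, $D=(5)(-\ell)$ gives genus field $K(\sqrt5)=\QQ(\sqrt5,\sqrt{-\ell})\subseteq H$. Its real quadratic subfield is $\QQ(\sqrt5)$, and this is the unique real quadratic subfield, since the other quadratic subfields are $\QQ(\sqrt{-\ell})$ and $K=\QQ(\sqrt{-5\ell})$, both imaginary. The genus field is the maximal subfield of $H$ abelian over $\QQ$, so the real genus field contained in $H$ is exactly $\QQ(\sqrt5)$.

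No step presents a genuine obstacle. The only point needing care is the bookkeeping of prime discriminants: making sure the oddness hypothesis excludes $-4$ and $\pm8$, and that cyclicity forces exactly one odd prime factor besides $5$. The hypothesis about the ramified class above $5$ is not needed for this lemma.
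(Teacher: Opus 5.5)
Your proposal is correct and follows essentially the same genus-theory argument as the paper: $2$-rank one forces exactly two prime-discriminant factors, the oddness of $D/5$ together with $D<0$ forces the second factor to be $-\ell$ with $\ell\equiv3\pmod4$, and $h(-15)=2$ excludes $\ell=3$. Your extra checks, namely that $\ell\neq5$ and that $\QQ(\sqrt5)$ is the unique real quadratic subfield of the genus field $\QQ(\sqrt5,\sqrt{-\ell})$, make explicit what the paper leaves implicit.
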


\begin{proof}
Since the cyclic group $C_{4M}$ has $2$-rank one, genus theory gives exactly
two prime-discriminant factors in $D$.  One is $5$.  Because $D/5$ is odd,
the other is a negative odd prime discriminant $-\ell$, with
$\ell\equiv3\pmod4$.  If $\ell=3$, then $D=-15$ and $h(D)=2$, contrary to
$h(D)=4M\ge4$.  The assertion about the real genus field follows from the
two prime-discriminant factors $5$ and $-\ell$.
\end{proof}

Write $D=-5n$ with $n=\ell$ as in Lemma~\ref{lem:shape-D}.  The ramified
norm-$5$ class is
\[
\mathfrak p=g^{2M}.
\]

Put
\[
X_n=S(e^{-\pi/\sqrt{5n}})
\]
and
\[
C_n=X_n^{-5}+11-X_n^5.
\]
Ramanathan's eta transformation formula, in the normalization used
in~\cite{Ramanathan,JhaFour}, gives
\[
C_n=
5\sqrt5
\left(
\frac{F(1)}{F(\mathfrak p)}
\right)^3.
\]
Since
\[
d_0=\log\frac{F(\mathfrak p)}{F(1)},
\]
we obtain

\begin{proposition}
With the preceding hypotheses,
\[
C_n=5\sqrt5\,e^{-3d_0}.
\]
Equivalently,
\[
C_n
=
5\sqrt5
\exp\left(
-\frac{3}{M}
\sum_{\substack{1\le k<2M\\ k\ {\rm odd}}}
\Lambda_k
\right).
\]
\end{proposition}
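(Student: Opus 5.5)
The first identity is just a substitution. The preceding display gives $C_n=5\sqrt5\,(F(1)/F(\mathfrak p))^3$, and by definition $d_0=f_{2M}-f_0=\log\bigl(F(g^{2M})/F(g^0)\bigr)=\log\bigl(F(\mathfrak p)/F(1)\bigr)$. This uses the standing hypothesis $\mathfrak p=g^{2M}$ and the fact that $g^0$ is the principal class. Hence $(F(1)/F(\mathfrak p))^3=e^{-3d_0}$. Since $F>0$ and all quantities are positive reals, no branch issue arises. I will also check that Ramanathan's normalization of $F$ agrees with the one fixed in Section 2: it uses the form $[a,b,c]$ with $z_A=(-b+\sqrt D)/(2a)$ and $F(A)=|\eta(z_A)|^2/\sqrt a$. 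The point of the check is that the ratio $F(1)/F(\mathfrak p)$ is representative-independent, so it is exactly the quantity in the quoted formula.

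For the second form, specialize Theorem~\ref{thm:cyclic-cosine}, equation \eqref{eq:forwardDCT}, to $j=0$. Every cosine factor equals $\cos 0=1$, so
\[
d_0=\frac1M\sum_{\substack{1\le k<2M\\ k\ {\rm odd}}}\Lambda_k .
\]
Substituting this into $C_n=5\sqrt5\,e^{-3d_0}$ gives the displayed exponential formula.

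As a sanity check, and as an independent route, I will rederive the $j=0$ case from the order-$2$ specialization of the Full Fourier extraction proposition with $C=\mathfrak p$:
\[
\log\frac{F(1)}{F(\mathfrak p)}=-\frac{2}{4M}\sum_{\chi(\mathfrak p)=-1}\Lambda_\chi .
\]
Since $\chi_\ell(g^{2M})=(-1)^\ell$, the characters with $\chi(\mathfrak p)=-1$ are exactly the odd $\ell$ modulo $4M$. Pairing $\ell$ with $4M-\ell$ and using $\Lambda_{4M-\ell}=\Lambda_\ell$ turns the sum into twice the sum over odd $k<2M$. This gives $-d_0=-\tfrac1M\sum_{k\ \rm odd}\Lambda_k$, which agrees with the first computation.

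There is no real obstacle here. The only point needing care is the sign convention: the Kronecker formula \eqref{eq:KLF} carries a minus sign, and $d_0$ must have the orientation $F(\mathfrak p)/F(1)$. The cross-check above confirms both.
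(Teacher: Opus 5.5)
Your proposal is correct and follows the paper's argument: substitute Ramanathan's formula $C_n=5\sqrt5\,(F(1)/F(\mathfrak p))^3$ together with $d_0=\log(F(\mathfrak p)/F(1))$, then set $j=0$ in the cosine transform \eqref{eq:forwardDCT}. Your cross-check via the order-two specialization of the full Fourier extraction, which pairs the odd $\ell$ with $4M-\ell$, is also correct and confirms the sign convention, but the paper does not need it.
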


Thus the Rogers--Ramanujan singular value sees the average of all odd
character modes.

\subsection*{The whole mirror vector as one level-5 Weber orbit}

The same ramified prime that produces the Rogers--Ramanujan mirror actually
realizes every mirror in the cyclic cosine transform.

\begin{theorem}[Level-$5$ realization of the cyclic mirrors]
\label{thm:all-mirrors-w5}
Assume
\[
\Cl(K)=\langle g\rangle\simeq C_{4M},
\qquad
\mathfrak p=g^{2M},
\]
where $\mathfrak p$ is the ramified prime class above $5$.  Choose a
$5$-system of primitive forms
\[
[A_j,B_j,C_j],
\qquad 0\le j<4M,
\]
ordered by the classes $g^j$, with
\[
(A_j,5)=1,
\qquad
5\mid C_j,
\qquad
B_j\equiv5\pmod{10},
\]
and put
\[
\tau_j=\frac{-B_j+\sqrt D}{2A_j}.
\]
Then for $0\le j<M$,
\begin{equation}
\frac{F(g^{2M-j})}{F(g^j)}
=\frac1{\sqrt5}
\left|\frac{\eta(\tau_j/5)}{\eta(\tau_j)}\right|^2.
\label{eq:all-mirrors-w5}
\end{equation}
Equivalently, every mirror difference is
\[
d_j=
2\log|\mathfrak w_5(\tau_j)|-\frac12\log5,
\qquad
\mathfrak w_5(\tau)=\frac{\eta(\tau/5)}{\eta(\tau)}.
\]
Moreover $\mathfrak w_5(\tau_j)^6$ is a ring-class invariant, so all mirror
ratios are algebraic.
\end{theorem}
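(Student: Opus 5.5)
The plan is to read the right-hand side of \eqref{eq:all-mirrors-w5} as a quotient of two Kronecker eta invariants. One is attached to the given form $[A_j,B_j,C_j]$. The other is attached to the form $[5A_j,B_j,C_j/5]$, whose class should be $g^j\mathfrak p$. The mirror symmetry $F(A)=F(A^{-1})$ then moves that class to $g^{2M-j}$.

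\emph{Step 1: existence and arithmetic of a $5$-system.} Every class contains primitive forms whose first coefficient is prime to $5$ (indeed prime to any given integer), so we may take $(A_j,5)=1$. Since $D=-5\ell$ is odd, $B_j$ is odd. Since $5\mid D$ and $D\equiv B_j^2\pmod 5$, we get $5\mid B_j$, hence $B_j\equiv5\pmod{10}$. Then $5\mid 4A_jC_j$ and $5\nmid A_j$ force $5\mid C_j$.

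\emph{Step 2: the auxiliary form is primitive of discriminant $D$ and lies in the class $g^j\mathfrak p$.} Put $Q'_j=[5A_j,B_j,C_j/5]$. Its discriminant is
\[
B_j^2-4(5A_j)(C_j/5)=D .
\]
For primitivity, the only possible new common factor is $5$, which would require $25\mid C_j$. In that case $25\mid B_j^2$ and $25\mid 4A_jC_j$, so $25\mid D$, contradicting $D=-5\ell$ from Lemma~\ref{lem:shape-D}.

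For the class, use the standard correspondence $[a,b,c]\leftrightarrow\bigl(a,\tfrac{-b+\sqrt D}{2}\bigr)$. The two forms $[A_j,B_j,\ast]$ and $[5,B_j,\ast]$ share the middle coefficient and have coprime leading coefficients, so they are concordant. Dirichlet composition then gives
\[
\Bigl(5A_j,\tfrac{-B_j+\sqrt D}{2}\Bigr)
=\Bigl(A_j,\tfrac{-B_j+\sqrt D}{2}\Bigr)\Bigl(5,\tfrac{-B_j+\sqrt D}{2}\Bigr).
\]
The second factor has norm $5$, because $\tfrac{-B_j+\sqrt D}{2}$ has norm $A_jC_j\equiv0\pmod5$. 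So it is the ramified prime $\mathfrak p$. Hence $Q'_j$ represents $g^j\mathfrak p=g^{j+2M}$. Since $\mathfrak p^2=1$, orientation conventions do not matter.

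\emph{Step 3: compute the ratio.} The root of $Q'_j$ is
\[
\frac{-B_j+\sqrt D}{10A_j}=\frac{\tau_j}{5}.
\]
Therefore
\[
F(g^{j+2M})=\frac{|\eta(\tau_j/5)|^2}{\sqrt{5A_j}},\qquad F(g^j)=\frac{|\eta(\tau_j)|^2}{\sqrt{A_j}} .
\]
Moreover $-(j+2M)\equiv 2M-j\pmod{4M}$, so $F(g^{j+2M})=F(g^{2M-j})$ by $F(A)=F(A^{-1})$. Dividing the two expressions gives \eqref{eq:all-mirrors-w5}. Taking logarithms gives the stated formula for $d_j$.

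\emph{Step 4: algebraicity.} I expect the last assertion to be the main obstacle. The plan is to invoke the Gee and Enge--Morain theory of generalized Weber functions~\cite{Gee,EngeMorain} for the function $\mathfrak w_5$ with exponent $s=6$; here $s(5-1)=24\equiv0\pmod{24}$, which is the standard admissibility condition. The conditions $(A_j,5)=1$, $5\mid C_j$ and $B_j\equiv5\pmod{10}$ are exactly the $N$-system normalization in their criterion. Under that normalization, $\mathfrak w_5(\tau_j)^6$ lies in the ring class field of the order of discriminant $D$ (here $H$), and its conjugates are permuted along the $5$-system.

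The delicate points are checking that the $\gcd$ and $24$-divisibility hypotheses of that theorem hold for $D=-5\ell$ (using $3\nmid D$ from Lemma~\ref{lem:shape-D}), and tracking the Shimura-reciprocity root-of-unity factors. Only algebraicity is needed at this stage, so it suffices that a fixed power of $\mathfrak w_5(\tau_j)$ is algebraic. In particular, the absolute value squared $|\mathfrak w_5(\tau_j)|^2$ is then algebraic, and hence so is each $R_j$.
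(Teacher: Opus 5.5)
Your argument is the paper's proof. The form $[5A_j,B_j,C_j/5]$, whose root is $\tau_j/5$, represents $g^j\mathfrak p=g^{j+2M}$; the symmetry $F(A)=F(A^{-1})$ turns this into the mirror class $g^{2M-j}$; and Enge--Morain's canonical exponent $6$ gives algebraicity through $R_j^6=Z_j\overline{Z_j}/125$. Your primitivity check and explicit Dirichlet composition fill in details the paper leaves implicit.

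There is one slip, in the step you did not need (Step~1). The condition $(A_j,5)=1$ does not force $5\mid B_j$: for example $[3,-1,13]$ at $D=-155$ has $A=3$ and $B=-1$. The congruence $D\equiv B_j^2\pmod 5$ already presupposes $5\mid A_jC_j$, so your deduction is circular. The correct way is to shift $B_j$ by a multiple of $2A_j$. Since these congruences are hypotheses of the statement, the proof itself is unaffected.
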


\begin{proof}
The point $\tau_j/5$ is represented by
\[
[5A_j,B_j,C_j/5].
\]
In the form class group this is the product of $g^j$ with the ramified
norm-$5$ class $\mathfrak p=g^{2M}$, hence it represents $g^{j+2M}$.
Therefore
\[
\frac{F(g^{j+2M})}{F(g^j)}
=\frac1{\sqrt5}
\left|\frac{\eta(\tau_j/5)}{\eta(\tau_j)}\right|^2.
\]
Since $F(A)=F(A^{-1})$ and
$-(j+2M)\equiv2M-j\pmod{4M}$, the left side is exactly the mirror ratio in
\eqref{eq:all-mirrors-w5}.  Finally, for $N=5$ the canonical exponent in
Enge--Morain's generalized-Weber theorem is $6$; their $N$-system theorem
therefore places $Z_j=\mathfrak w_5(\tau_j)^6$ in the ring class field and
gives its class-group conjugates~\cite[Theorem~4.3]{EngeMorain}.  Since
\[
R_j^6=\frac{Z_j\overline{Z_j}}{125},
\]
the mirror ratio is algebraic as claimed.
\end{proof}

\begin{remark}
The Rogers--Ramanujan quantity corresponds only to $j=0$, but
Theorem~\ref{thm:all-mirrors-w5} shows that the remaining non-genus modes do
not require unrelated modular functions: they are other points in the same
level-$5$ generalized-Weber orbit.  Thus the cyclic discrete cosine transform
and Ramanathan's norm-$5$ construction are two aspects of a single CM
mechanism.
\end{remark}

\subsection*{Primitive powered mirrors from Fourier nonvanishing}

The degree pattern in the examples can be pushed considerably further without
any additional class-polynomial computation.  The useful algebraic object is
first the powered mirror $R_j^6$, whose membership in the Hilbert class field
follows directly from the canonical level-$5$ Weber invariant.  A second,
classical Kronecker invariant supplies the missing primitivity.

Let $g_{\rm MM}(A)$ denote the invariant normalized by Murty--Murty
\cite{MurtyMurty}, and rescale it by
\[
 \mathfrak G(A):=(2\pi)^{12}g_{\rm MM}(A).
\]
This harmless global rescaling leaves every quotient and Artin action
unchanged.  If $A$ is represented by $[a,b,c]$, with
$z_A=(-b+\sqrt D)/(2a)$, their definition then gives
\begin{equation}\label{eq:G-F-v12}
 |\mathfrak G(A)|=F(A)^{12}.
\end{equation}
Indeed, for the ideal
$\langle a,(-b+\sqrt D)/2\rangle$ one has norm $a$ and
$\Delta(z)=(2\pi)^{12}\eta(z)^{24}$, while Murty--Murty's invariant carries
an overall factor $(2\pi)^{-12}$; after the displayed rescaling the remaining
factors collapse to $a^{-6}\eta(z_A)^{24}$.

Two standard CM facts will be used.  First,
$\mathfrak G(A)/\mathfrak G(B)$ lies in the Hilbert class field $H$.
Second, after choosing the Artin generator $\sigma$ compatibly with $g$, one
has
\begin{equation}\label{eq:G-Artin-v12}
 \left(\frac{\mathfrak G(A)}{\mathfrak G(B)}\right)^{\sigma^i}
 =\frac{\mathfrak G(g^iA)}{\mathfrak G(g^iB)}.
\end{equation}
These are the classical transformation statements recorded explicitly in
\cite[Props.~13--14]{MurtyMurty}.

\begin{theorem}[Primitive powered mirrors]
\label{thm:primitive-powered-mirror-v12}
Keep the hypotheses of Theorem~\ref{thm:all-mirrors-w5}, put
$h=4M$, and let $H$ be the Hilbert class field of $K$.  Write
\[
 \Gal(H/K)=\langle\sigma\rangle\simeq C_h
\]
and let $c$ be complex conjugation.  For $0\le j<M$, put
\[
 R_j=\frac{F(g^{2M-j})}{F(g^j)}
     =\frac{F(g^{j+2M})}{F(g^j)}.
\]
Then
\begin{equation}\label{eq:powered-generator-v12}
 \QQ(R_j^6)=H^{\langle c\rangle}.
\end{equation}
In particular
\[
 [\QQ(R_j^6):\QQ]=h.
\]
Moreover, if
\[
 B_j:=R_j^{24},\qquad T_j:=B_j+B_j^{-1},
\]
then
\begin{equation}\label{eq:powered-trace-field-v12}
 \QQ(T_j)=H^{\langle c,\sigma^{2M}\rangle},
 \qquad [\QQ(T_j):\QQ]=\frac h2.
\end{equation}
The field $\QQ(T_j)$ contains the real genus field $\QQ(\sqrt5)$, and hence
\begin{equation}\label{eq:powered-genus-degree-v12}
 [\QQ(\sqrt5,T_j):\QQ(\sqrt5)]=\frac h4.
\end{equation}
\end{theorem}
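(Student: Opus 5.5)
The plan is to reduce all three field statements to Galois-orbit counts, computed through the Murty--Murty invariant $\mathfrak G$. The key input is that the orbit of the relevant quantity under $\Gal(H/K)$ is governed by a translate of the mirror vector. The Fourier transform of that vector is controlled by the $\Lambda_k$ through Theorem~\ref{thm:cyclic-cosine}.

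\textbf{Membership.} By Theorem~\ref{thm:all-mirrors-w5}, $R_j^6=Z_j\overline{Z_j}/125$ with $Z_j\in H$. Since $H/\QQ$ is Galois, $\overline{Z_j}\in H$, so $R_j^6$ is a real element of $H$, i.e.\ $R_j^6\in H^{\langle c\rangle}$. Hence $B_j$ and $T_j$ also lie there.

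\textbf{Galois conjugates of $B_j$ over $K$.} Put
\[
u=\frac{\mathfrak G(g^{j+2M})\,\mathfrak G(g^{-j-2M})}{\mathfrak G(g^{j})\,\mathfrak G(g^{-j})}\in H .
\]
By \eqref{eq:G-F-v12} and $F(A)=F(A^{-1})$ we get $|u|=R_j^{24}$. I will check, using the CM description of complex conjugation on $\mathfrak G$, which sends $\mathfrak G(A)$ to $\mathfrak G(A^{-1})$ up to the global real normalization, that $u$ is real and positive, so $u=B_j$. Then \eqref{eq:G-Artin-v12} gives
\[
B_j^{\sigma^i}=\exp\bigl(12\,\Phi(i)\bigr),\qquad
\Phi(i)=f_{i+j+2M}+f_{i-j+2M}-f_{i+j}-f_{i-j},
\]
using $f_{-m}=f_m$. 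Suppose $\sigma^i$ fixes $B_j$. Then $\Phi(t+i)=\Phi(t)$ for all $t$, since applying further powers of $\sigma$ preserves the equality.

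Expand $\Phi$ in characters via the Fourier inversion in the proof of Theorem~\ref{thm:cyclic-cosine}. The $\chi_\ell$-coefficient is a nonzero constant times
\[
\Lambda_\ell\,(\chi_\ell(g)^{2M}-1)\bigl(\chi_\ell(g)^j+\chi_\ell(g)^{-j}\bigr).
\]
This vanishes for even $\ell$. For odd $\ell$ it equals $-4\Lambda_\ell\cos(\pi\ell j/2M)$ up to that constant. Take $\ell=1$: we have $\cos(\pi j/2M)\neq0$ because $0\le j<M$, and $\Lambda_1\neq0$ by nonvanishing of $L_K(1,\chi)$ for nonprincipal $\chi$. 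Translation invariance by $i$ forces $\chi_1(g)^i=1$ on every nonzero mode, and $\chi_1$ is faithful, so $i\equiv0\pmod{4M}$.

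Hence $B_j$ has $h$ distinct conjugates over $K$, and $[\QQ(B_j):\QQ]\ge h=[H^{\langle c\rangle}:\QQ]$. This gives $\QQ(B_j)=H^{\langle c\rangle}$. Since $B_j=(R_j^6)^4$, we get $\QQ(B_j)\subseteq\QQ(R_j^6)\subseteq H^{\langle c\rangle}$, which proves \eqref{eq:powered-generator-v12}.

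\textbf{The trace $T_j$.} The shift by $2M$ gives $\Phi(i+2M)=-\Phi(i)$, by swapping the pairs of indices. Hence $B_j^{\sigma^{2M}}=B_j^{-1}$, and $T_j$ is fixed by $\langle c,\sigma^{2M}\rangle$, a group of order $4$. Conversely, suppose $\sigma^i$ fixes $T_j$. Then $B_j^{\sigma^i}\in\{B_j,B_j^{-1}\}=\{B_j,B_j^{\sigma^{2M}}\}$, so by the primitivity just proved, $i\in\{0,2M\}$. The elements $c\sigma^i$ act on $T_j$ as $\sigma^i$, since $c$ fixes $T_j$, so no new stabilizer elements arise. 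Therefore the stabilizer of $T_j$ in $\Gal(H/\QQ)$ is exactly $\langle c,\sigma^{2M}\rangle$, which gives \eqref{eq:powered-trace-field-v12} with degree $2h/4=h/2$.

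\textbf{The genus field.} By Lemma~\ref{lem:shape-D}, $\QQ(\sqrt5)\subseteq H$. Under the Artin map, $K(\sqrt5)$ is the fixed field of the squares $\langle\sigma^2\rangle$ in $\Gal(H/K)$. Since $\sigma^{2M}=(\sigma^M)^2$ is a square, it fixes $\sqrt5$, and $c$ fixes the real number $\sqrt5$. Hence $\sqrt5\in\QQ(T_j)$, and \eqref{eq:powered-genus-degree-v12} follows by dividing degrees.

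The main obstacle is the identification $u=B_j$. One must verify that complex conjugation acts on the rescaled $\mathfrak G$ as $A\mapsto A^{-1}$ with no stray root-of-unity factor, so that $u$ is real and positive rather than merely of absolute value $R_j^{24}$. If this fails, I would replace $u$ by $u\bar u$, i.e.\ work with $B_j^2$, which costs nothing in the orbit argument since $B_j^2$ is still a power of $R_j^6$.
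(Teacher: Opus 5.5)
Your proof follows the paper's route: an element of $H$ of modulus $R_j^{24}$ built from the Murty--Murty invariants; moduli of its conjugates written as shifted sums of the $f_t$; the first Fourier coefficient (a multiple of $\Lambda_1\cos(\pi j/2M)\neq0$) to rule out every nontrivial rotation; the sandwich $\QQ(B_j)\subseteq\QQ(R_j^6)\subseteq H^{\langle c\rangle}$; the half-turn for $T_j$; and $\langle\sigma^2\rangle$ for the genus field. Two steps need repair. The first is the obstacle you flag, which the paper avoids by its choice of auxiliary element. Put $Q_j=\mathfrak G(g^{j+2M})/\mathfrak G(g^j)\in H$ and use $Q_j\overline{Q_j}$ instead of your $u$. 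Since $H/\QQ$ is Galois, $\overline{Q_j}=c(Q_j)\in H$. The product is manifestly positive and equals $R_j^{24}=B_j$ by $|\mathfrak G(A)|=F(A)^{12}$. The relation $\sigma^i c=c\sigma^{-i}$ then gives $\log|B_j^{\sigma^i}|=12(w_{j+i}+w_{j-i})$ with $w_t=f_{t+2M}-f_t$. No statement about how $c$ acts on individual values $\mathfrak G(A)$ is needed.

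The second issue is that your formula $B_j^{\sigma^i}=\exp(12\Phi(i))$ holds only in absolute value. Once $B_j$ generates $H^{\langle c\rangle}$, the conjugate $B_j^{\sigma^i}$ is real only if $\sigma^{-i}c\sigma^i=c\sigma^{2i}$ fixes $B_j$. That happens only for $i\in\{0,2M\}$, so the other conjugates are non-real. This does not affect primitivity, which uses only moduli. But it means that $\Phi(i+2M)=-\Phi(i)$ yields only $|B_j^{\sigma^{2M}}|=B_j^{-1}$, not the exact identity $B_j^{\sigma^{2M}}=B_j^{-1}$ that the trace step needs. You can get the exact identity in either of two ways. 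One is to apply the Artin formula to the quotient itself, $Q_j^{\sigma^{2M}}=\mathfrak G(g^{j})/\mathfrak G(g^{j+2M})=Q_j^{-1}$, and combine it with $c\sigma^{2M}=\sigma^{2M}c$; this is what the paper does. The other is positivity: $B_j^{\sigma^{2M}}=(U_j^{\sigma^{2M}})^4$ with $U_j=R_j^6$, and $U_j^{\sigma^{2M}}$ is fixed by $c$, hence real, so $B_j^{\sigma^{2M}}>0$ and the modulus determines it. The positivity argument also completes your $B_j^2$ fallback.
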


\begin{proof}
Set
\[
 f_t=\log F(g^t),\qquad
 w_t=f_{t+2M}-f_t\qquad(t\bmod h).
\]
Since $F(A)=F(A^{-1})$, the original mirror is $R_j=e^{w_j}$.
Let
\[
 \zeta=e^{2\pi i/h},\qquad
 \chi_k(g)=\zeta^k,
\]
and retain the notation
\[
 \Lambda_k=\frac{\sqrt{|D|}}{2\pi}L_K(1,\chi_k).
\]
The Kronecker limit formula gives
\[
 \Lambda_k=-\sum_{t=0}^{h-1}\zeta^{kt}f_t.
\]
Consequently
\begin{equation}\label{eq:w-Fourier-v12}
 \sum_{t=0}^{h-1}w_t\zeta^{kt}
 =\bigl(1-(-1)^k\bigr)\Lambda_k.
\end{equation}
In particular the primitive frequency is
\[
 \sum_t w_t\zeta^t=2\Lambda_1\ne0,
\]
because the Hecke $L$-value of a nonprincipal class-group character is
nonzero at $s=1$; see, for example, \cite[Ch.~V, Thm.~2.4(b)]{MilneCFT}.

Now define
\[
 Q_j:=\frac{\mathfrak G(g^{j+2M})}{\mathfrak G(g^j)}\in H,
 \qquad
 B_j^{\ast}:=Q_j\overline{Q_j}\in H^{\langle c\rangle}.
\]
By \eqref{eq:G-F-v12},
\[
 B_j^{\ast}=R_j^{24}=B_j.
\]
Using \eqref{eq:G-Artin-v12},
\[
 \log|Q_j^{\sigma^i}|=12w_{j+i}.
\]
Since $c\sigma^i=\sigma^{-i}c$, it follows that
\begin{equation}\label{eq:B-log-orbit-v12}
 \ell_i:=\log|B_j^{\sigma^i}|
 =12\bigl(w_{j+i}+w_{j-i}\bigr).
\end{equation}
Taking the first finite Fourier coefficient and using
\eqref{eq:w-Fourier-v12} together with
$\Lambda_{-1}=\Lambda_1$ gives
\begin{equation}\label{eq:B-first-mode-v12}
 \sum_{i=0}^{h-1}\ell_i\zeta^i
 =48\Lambda_1\cos\frac{2\pi j}{h}.
\end{equation}
For $0\le j<M$ the cosine is strictly positive, so this coefficient is
nonzero.

Suppose now that $B_j^{\sigma^s}=B_j$.  Applying $\sigma^i$ shows that the
sequence $\ell_i$ is periodic with shift $s$.  A nonzero first Fourier
coefficient can be invariant under that shift only if $\zeta^s=1$.
Thus $s\equiv0\pmod h$: no nontrivial rotation fixes $B_j$.  Since $c$ fixes
$B_j$, and a reflection $c\sigma^s$ fixes it only when the rotation
$\sigma^s$ does, one obtains
\[
 \operatorname{Stab}_{\Gal(H/\QQ)}(B_j)=\langle c\rangle.
\]
Hence
\begin{equation}\label{eq:B-generates-real-v12}
 \QQ(B_j)=H^{\langle c\rangle}.
\end{equation}

Theorem~\ref{thm:all-mirrors-w5} gives
$Z_j=\mathfrak w_5(\tau_j)^6\in H$, and therefore
\[
 R_j^6=\frac{Z_j\overline{Z_j}}{125}\in H^{\langle c\rangle}.
\]
Since $B_j=(R_j^6)^4$, \eqref{eq:B-generates-real-v12} yields the chain
\[
 H^{\langle c\rangle}=\QQ(B_j)
 \subseteq\QQ(R_j^6)
 \subseteq H^{\langle c\rangle},
\]
which proves \eqref{eq:powered-generator-v12}.

It remains to analyze the reciprocal trace.  From
\eqref{eq:G-Artin-v12}, the half-turn satisfies
\[
 Q_j^{\sigma^{2M}}=Q_j^{-1},
\]
and, because $\sigma^{2M}$ commutes with $c$,
\[
 B_j^{\sigma^{2M}}=B_j^{-1}.
\]
Thus $T_j$ is fixed by $c$ and $\sigma^{2M}$.  Conversely, if a rotation
fixes $T_j$, then it sends $B_j$ to one of the two roots $B_j,B_j^{-1}$ of
$X^2-T_jX+1$.  The primitive orbit of $B_j$ forces that rotation to be
respectively $1$ or $\sigma^{2M}$.  Hence
\[
 \operatorname{Stab}(T_j)=\langle c,\sigma^{2M}\rangle,
\]
proving \eqref{eq:powered-trace-field-v12}.

Finally, because the class group $C_{4M}$ is cyclic of even order,
$\Cl(K)/\Cl(K)^2\simeq C_2$.  Genus theory therefore gives a unique
nontrivial genus character, whose kernel is $\langle\sigma^2\rangle$.
Since $D=-5n$, the corresponding real genus field is $\QQ(\sqrt5)$,
fixed in $H$ by $\langle c,\sigma^2\rangle$.  Because $2M$ is even,
\[
 \langle c,\sigma^{2M}\rangle
 \subseteq\langle c,\sigma^2\rangle,
\]
so $\QQ(\sqrt5)\subseteq\QQ(T_j)$ and the final degree is $h/4=M$.
\end{proof}

\begin{corollary}[The sixth-power reciprocal trace]
\label{cor:sixth-trace-v13}
Put
\[
 U_j=R_j^6,
 \qquad
 S_j=U_j+U_j^{-1}=R_j^6+R_j^{-6}.
\]
Then
\begin{equation}\label{eq:sixth-trace-field-v13}
 \QQ(S_j)=H^{\langle c,\sigma^{2M}\rangle}
 =\QQ(R_j^{24}+R_j^{-24}).
\end{equation}
Consequently
\[
 [\QQ(S_j):\QQ]=h/2,
 \qquad
 [\QQ(\sqrt5,S_j):\QQ(\sqrt5)]=h/4.
\]
\end{corollary}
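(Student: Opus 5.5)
We place $S_j$ between two fields that are already known by Theorem~\ref{thm:primitive-powered-mirror-v12}. Put $U_j=R_j^6$, so that $B_j=U_j^4$ and $T_j=B_j+B_j^{-1}$.

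\emph{Upper bound.} Theorem~\ref{thm:primitive-powered-mirror-v12} gives $U_j\in H^{\langle c\rangle}$, hence $S_j\in H^{\langle c\rangle}$. Next we need $U_j^{\sigma^{2M}}=U_j^{-1}$. The proof of Theorem~\ref{thm:primitive-powered-mirror-v12} only establishes $B_j^{\sigma^{2M}}=B_j^{-1}$, so $(U_j^{\sigma^{2M}}U_j)^4=1$. The product $U_j^{\sigma^{2M}}U_j$ is therefore a fourth root of unity lying in the real field $H^{\langle c\rangle}$, so it equals $\pm1$. To rule out $-1$, we argue that $U_j^{\sigma^{2M}}$ is a Galois conjugate of the positive real number $U_j=R_j^6$. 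Since $\sigma^{2M}$ commutes with $c$, that conjugate lies in $H^{\langle c\rangle}$. We then identify it via the Weber orbit of Theorem~\ref{thm:all-mirrors-w5}: the conjugates of $Z_j\overline{Z_j}/125$ are $|\cdot|^2$-type expressions $Z'\overline{Z'}/125$ for the conjugate Weber values $Z'$. For $\sigma^{2M}$ the conjugate is $\mathfrak w_5(\tau_{j+2M})^6$ up to a root of unity and the conjugate pairing, and this gives a positive real number. Hence the sign is $+1$. I expect this sign/positivity check to be the delicate point. If the Enge--Morain conjugation formula is awkward here, the fallback is to take absolute values of logarithms as in \eqref{eq:B-log-orbit-v12} and use $U_j^{\sigma^{2M}}>0$, which kills the $-1$ option. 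With this, $S_j$ is fixed by $c$ and $\sigma^{2M}$, so $\QQ(S_j)\subseteq H^{\langle c,\sigma^{2M}\rangle}$.

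\emph{Lower bound.} Suppose a rotation $\sigma^s$ fixes $S_j$. Then it sends $U_j$ to a root of $X^2-S_jX+1$, that is, to $U_j$ or $U_j^{-1}$. Raising to the fourth power, it sends $B_j$ to $B_j$ or $B_j^{-1}=B_j^{\sigma^{2M}}$. The primitivity $\operatorname{Stab}(B_j)=\langle c\rangle$ from Theorem~\ref{thm:primitive-powered-mirror-v12} then forces $s\equiv0$ or $2M\pmod{4M}$. Reflections $c\sigma^s$ reduce to rotations because $c$ fixes $S_j$. Therefore $\operatorname{Stab}(S_j)=\langle c,\sigma^{2M}\rangle$.

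Combining the two bounds, Galois correspondence gives $\QQ(S_j)=H^{\langle c,\sigma^{2M}\rangle}$. This equals $\QQ(T_j)=\QQ(R_j^{24}+R_j^{-24})$ by \eqref{eq:powered-trace-field-v12}. The degrees follow: $[\QQ(S_j):\QQ]=8M/4=h/2$. Moreover $\QQ(\sqrt5)\subseteq\QQ(S_j)$, as shown at the end of the proof of Theorem~\ref{thm:primitive-powered-mirror-v12}, so $[\QQ(\sqrt5,S_j):\QQ(\sqrt5)]=h/4$.
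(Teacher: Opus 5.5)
Your proof is correct and follows essentially the paper's route. You combine $B_j^{\sigma^{2M}}=B_j^{-1}$ with positivity of $U_j^{\sigma^{2M}}$ to get $U_j^{\sigma^{2M}}=U_j^{-1}$, then run the same stabilizer argument; you use the primitivity of $B_j$ where the paper uses that of $U_j$, which is equivalent here.

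The positivity step needs a cleaner justification.
\begin{itemize}
\item The one-line reason is that $\sigma^{2M}$ commutes with $c$, so $U_j^{\sigma^{2M}}=Z_j^{\sigma^{2M}}\overline{Z_j^{\sigma^{2M}}}/125>0$. This is exactly the paper's argument.
\item Your intermediate claim that every conjugate of $Z_j\overline{Z_j}/125$ is of $|\cdot|^2$ type is false. The $\sigma^i$-conjugate is $Z_{j+i}\overline{Z_{j-i}}/125$, which is not even real unless $i\equiv 0$ or $2M\pmod{4M}$.
\item Your logarithmic fallback is circular. It presupposes $U_j^{\sigma^{2M}}>0$, and the orbit formula only records absolute values.
\end{itemize}
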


\begin{proof}
Theorem~\ref{thm:primitive-powered-mirror-v12} gives
$\QQ(U_j)=H^{\langle c\rangle}$ and
$B_j=U_j^4$ with $B_j^{\sigma^{2M}}=B_j^{-1}$.  Therefore
\[
 (U_j^{\sigma^{2M}})^4=U_j^{-4}.
\]
The half-turn commutes with $c$, and
\[
 U_j=\frac{Z_j\overline{Z_j}}{125},
 \qquad Z_j=\mathfrak w_5(\tau_j)^6.
\]
Hence, in the distinguished complex embedding,
\[
 U_j^{\sigma^{2M}}
 =\frac{Z_j^{\sigma^{2M}}
        \overline{Z_j^{\sigma^{2M}}}}{125}>0.
\]
Since $U_j^{-1}>0$, uniqueness of the positive fourth root gives
\begin{equation}\label{eq:U-halfturn-v13}
 U_j^{\sigma^{2M}}=U_j^{-1}.
\end{equation}
Now a rotation fixing $S_j$ must send the primitive element $U_j$ to either
$U_j$ or $U_j^{-1}$, hence must be respectively $1$ or
$\sigma^{2M}$.  The reflection argument is identical to that in
Theorem~\ref{thm:primitive-powered-mirror-v12}; therefore
$\operatorname{Stab}(S_j)=\langle c,\sigma^{2M}\rangle$.  This proves the
first equality in \eqref{eq:sixth-trace-field-v13}.  The second follows
because the field generated by $R_j^{24}+R_j^{-24}$ was shown in the theorem
to have the same fixed group, and the degree statements follow as before.
\end{proof}

\begin{theorem}[Square-powered mirrors and the exact residual obstruction]
\label{thm:square-mirror-v13}
Under the standing hypotheses of Theorem~\ref{thm:all-mirrors-w5}, one has
\begin{equation}\label{eq:square-generator-v13}
 \QQ(R_j^2)=H^{\langle c\rangle}.
\end{equation}
Moreover, with $Y_j=R_j+R_j^{-1}$,
\begin{equation}\label{eq:squared-trace-v13}
 \QQ(Y_j^2)
 =H^{\langle c,\sigma^{2M}\rangle},
\end{equation}
so that
\begin{align}
 [\QQ(R_j^2):\QQ]&=h,\label{eq:R2-degree-v13}\\
 [\QQ(Y_j^2):\QQ]&=h/2,\label{eq:Y2-degree-v13}\\
 [\QQ(\sqrt5,Y_j^2):\QQ(\sqrt5)]&=h/4.
 \label{eq:Y2-genus-degree-v13}
\end{align}
Hence the unpowered degrees satisfy the exact dichotomies
\begin{equation}\label{eq:degree-dichotomy-v13}
 [\QQ(R_j):\QQ]\in\{h,2h\},
 \qquad
 [\QQ(Y_j):\QQ]\in\{h/2,h\}.
\end{equation}
Over the genus field,
\begin{equation}\label{eq:genus-dichotomy-v13}
 [\QQ(\sqrt5,Y_j):\QQ(\sqrt5)]
 \in\{h/4,h/2\}.
\end{equation}
\end{theorem}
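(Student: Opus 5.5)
The plan is to show that $R_j^2$ already lies in $H^{\langle c\rangle}$, and then reuse the sandwich argument of Theorem~\ref{thm:primitive-powered-mirror-v12}. By that theorem, $R_j^6\in H^{\langle c\rangle}$ and $\QQ(R_j^6)=H^{\langle c\rangle}$. So it suffices to show $R_j^2\in H$; then
\[
H^{\langle c\rangle}=\QQ(R_j^6)\subseteq\QQ(R_j^2)\subseteq H^{\langle c\rangle},
\]
since $R_j^2$ is real.

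\textbf{Lower power in $H$.} I would use the Weber function $\mathfrak w_5$ with a smaller exponent. The canonical exponent $6$ for $N=5$ in Enge--Morain comes from the condition $24\mid s(N-1)$. Under the extra conditions present here, their refined $N$-system theorem allows a lower exponent:
\begin{itemize}
\item $3\nmid D$, by Lemma~\ref{lem:shape-D};
\item $B_j\equiv5\pmod{10}$, the ramified $5$-system normalization.
\end{itemize}
Concretely, the $3$-part of the exponent can be dropped when $3\nmid D$. Then $\mathfrak w_5(\tau_j)^2$ is a class invariant, possibly up to a root of unity, or after multiplying by a fixed power of $\sqrt5$. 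This gives
\[
R_j^2=\frac{|\mathfrak w_5(\tau_j)^2|^2}{5}\in H^{\langle c\rangle}.
\]
An alternative route is the cube-root descent: $R_j^6\in H$ and $H$ is real-closed enough. Then $R_j^2$ is the real cube root of a totally positive element whose Galois conjugates $|Z_j^{\sigma^i}|^2/125$ are again sixth powers of $R$-type mirrors. One would then argue via $\gcd(3,h)$ considerations. I expect this exponent-lowering to be the main obstacle. I would first try to verify the Enge--Morain exponent condition directly, checking that $\eta(\tau/5)/\eta(\tau)$ squared transforms under $\Gamma^0(5)$ with a character that is trivial on the relevant stabilizers when $3\nmid D$.

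\textbf{Half-turn and trace.} For the half-turn, I would argue exactly as in Corollary~\ref{cor:sixth-trace-v13}. Put $V_j=R_j^2$. Then $V_j^{\sigma^{2M}}$ is positive, being a product $W\overline W/5$ of conjugates. Since
\[
(V_j^{\sigma^{2M}})^3=U_j^{\sigma^{2M}}=U_j^{-1}=V_j^{-3},
\]
uniqueness of the positive cube root gives $V_j^{\sigma^{2M}}=V_j^{-1}$. Then
\[
Y_j^2=V_j+V_j^{-1}+2
\]
is fixed by $c$ and $\sigma^{2M}$. Conversely, take any rotation fixing $Y_j^2$. It fixes $V_j+V_j^{-1}$, so it sends the primitive element $V_j$ to $V_j^{\pm1}$, and hence is $1$ or $\sigma^{2M}$. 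The reflection case is handled as before. This proves \eqref{eq:squared-trace-v13}. The degrees \eqref{eq:R2-degree-v13}–\eqref{eq:Y2-genus-degree-v13} follow from $|\langle c\rangle|=2$, $|\langle c,\sigma^{2M}\rangle|=4$ and $\QQ(\sqrt5)\subseteq H^{\langle c,\sigma^{2M}\rangle}$, as in Theorem~\ref{thm:primitive-powered-mirror-v12}.

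\textbf{Dichotomies.} These are formal consequences. We have $\QQ(R_j^2)\subseteq\QQ(R_j)$ with $[\QQ(R_j):\QQ(R_j^2)]\le2$, since $R_j$ is a root of $X^2-R_j^2$. This gives $[\QQ(R_j):\QQ]\in\{h,2h\}$. Likewise $\QQ(Y_j^2)\subseteq\QQ(Y_j)$ with index at most $2$, so $[\QQ(Y_j):\QQ]\in\{h/2,h\}$. Finally, $\sqrt5\in\QQ(Y_j^2)$, so $[\QQ(\sqrt5,Y_j):\QQ(\sqrt5)]=[\QQ(Y_j):\QQ]/2$, which gives \eqref{eq:genus-dichotomy-v13}.
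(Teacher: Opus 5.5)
Your proposal is correct and follows the paper's proof essentially step for step: Enge--Morain's lower-power analysis for $3\nmid D$ gives $W_j=\mathfrak w_5(\tau_j)^2\in H$, so $V_j=R_j^2=W_j\overline{W_j}/5\in H^{\langle c\rangle}$ is sandwiched against $\QQ(R_j^6)=H^{\langle c\rangle}$. Positivity together with $U_j^{\sigma^{2M}}=U_j^{-1}$ then gives $V_j^{\sigma^{2M}}=V_j^{-1}$, and the trace field, degrees and dichotomies follow by the same stabilizer and square-root arguments.

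The paper is more precise in one respect: it applies the lower-power theorem to a $15$-system ($\gcd(A,15)=1$, $15\mid B$, $5\mid C$). Your root-of-unity hedge absorbs this harmlessly, since $|\mathfrak w_5(\tau)|^2=\sqrt5\,R_j$ does not depend on the admissible representative. Drop the cube-root ``alternative route'', though: nothing forces a real cube root of an element of $H^{\langle c\rangle}$ to lie in $H$.
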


\begin{proof}
By Lemma~\ref{lem:shape-D}, one has $3\nmid D$.
For $N=5$, the canonical generalized-Weber exponent is $6$.
Since $3\nmid D$, Enge--Morain's lower-power analysis
\cite[\S6.1.1]{EngeMorain} lowers this to exponent $e=2$.  More precisely,
according as $D\equiv1$ or $2\pmod3$, their conditions $1{:}3$ or $2{:}3$
apply.  In the notation of that section these conditions force $3\mid B$;
while $5\mid D$ and the level-$5$ condition force $5\mid B$.  Thus one may
choose the resulting $15$-system with
\[
 \gcd(A,15)=1,\qquad 15\mid B,\qquad 5\mid C,
\]
and for every class its singular value
\[
 W_j:=\mathfrak w_5(\tau_j)^2\in H.
\]
The representative still satisfies the level-$5$ condition used in
Theorem~\ref{thm:all-mirrors-w5}, so
\[
 R_j^2
 =\frac{W_j\overline{W_j}}5
 \in H^{\langle c\rangle}.
\]
Put $V_j=R_j^2$.  Since $V_j^3=R_j^6=U_j$, Corollary~\ref{cor:sixth-trace-v13}
and Theorem~\ref{thm:primitive-powered-mirror-v12} give
\[
 H^{\langle c\rangle}=\QQ(U_j)
 \subseteq\QQ(V_j)
 \subseteq H^{\langle c\rangle},
\]
which proves \eqref{eq:square-generator-v13}.

The half-turn again commutes with $c$, so
\[
 V_j^{\sigma^{2M}}
 =\frac{W_j^{\sigma^{2M}}
        \overline{W_j^{\sigma^{2M}}}}5>0.
\]
On the other hand
\[
 (V_j^{\sigma^{2M}})^3
 =U_j^{\sigma^{2M}}
 =U_j^{-1}=V_j^{-3}
\]
by \eqref{eq:U-halfturn-v13}.  Positivity therefore gives
\begin{equation}\label{eq:V-halfturn-v13}
 V_j^{\sigma^{2M}}=V_j^{-1}.
\end{equation}
Since $V_j$ generates $H^{\langle c\rangle}$, the same stabilizer argument
shows
\[
 \QQ(V_j+V_j^{-1})
 =H^{\langle c,\sigma^{2M}\rangle}.
\]
But
\[
 Y_j^2=(R_j+R_j^{-1})^2
      =V_j+V_j^{-1}+2,
\]
so adjoining $Y_j^2$ gives the same field, proving
\eqref{eq:squared-trace-v13} and the degree formulas.

Finally $R_j$ is a square root of the primitive element $V_j$, while $Y_j$
is a square root of $Y_j^2$.  Thus the corresponding extensions have degree
at most two, and the displayed dichotomies follow.
\end{proof}

\begin{proof}[Proof of Theorem~\ref{thm:main}]

Retain the lower-power $15$-system used in the proof of
Theorem~\ref{thm:square-mirror-v13}.  Thus a representative of $g^j$ is
$[A,B,C]$ with $\gcd(A,15)=1$, $5\mid C$ and $15\mid B$, and
\[
 \tau=\frac{-B+\sqrt D}{2A},
 \qquad
 x=\mathfrak w_5(\tau)^2\in H.
\]
The condition $15\mid B$ is exactly what makes the inverse form compatible
with the same $15$-system: $[A,-B,C]$ has the same middle coefficient modulo
$30$.  Its CM point is $-\overline\tau$ and represents $g^{-j}$.

Write $W_5z=-5/z$ for the Fricke involution.  Dedekind's eta inversion
formula gives the exact identity
\begin{equation}\label{eq:w5-Fricke-v14}
 \mathfrak w_5(W_5z)
 =\frac{\sqrt5}{\mathfrak w_5(z)}.
\end{equation}
Indeed,
$\eta(-1/z)=\sqrt{-iz}\,\eta(z)$ applied to $z$ and $z/5$ gives
\eqref{eq:w5-Fricke-v14}.  Since the $q$-expansion of $\eta$ is real,
$\mathfrak w_5(-\overline z)=\overline{\mathfrak w_5(z)}$.
Therefore
\begin{equation}\label{eq:R-ray-quotient-v14}
 R_j=
 \frac{\mathfrak w_5(\tau)}
      {\mathfrak w_5(W_5(-\overline\tau))}.
\end{equation}
To check the class attached to the denominator, note that
$W_5(-\overline\tau)$ is a root of
\[
 [C/5,B,5A].
\]
This form represents $g^{2M-j}$.  It is again an admissible member of the
same $15$-system: because $D$ is fundamental, $5\mid D$ and $5\nmid A$, the
identity $B^2-D=4AC$ gives $5\Vert C$; and because $3\nmid D$ while
$3\mid B$, one has $3\nmid C$.  Hence $\gcd(C/5,15)=1$, while its middle
coefficient is still $B$ modulo $30$.

We now descend the quotient in \eqref{eq:R-ray-quotient-v14}.  For
$N=5$ and exponent $e=1$, Enge--Morain's Theorem~3.2 gives
$\mathfrak w_5\in\mathcal F_{30}$; Shimura reciprocity
\cite[Theorem~2.2]{EngeMorain} therefore places all singular values appearing
above in the ray class field $L$ of modulus $30$ over $K$.  The Hilbert class
field $H$ is contained in $L$, and $\Gal(L/K)$ is abelian.  Put
\[
 a=\mathfrak w_5(\tau),
 \qquad
 b=\mathfrak w_5(W_5(-\overline\tau)).
\]
Their squares $a^2,b^2$ lie in $H$ by the lower-power theorem and correspond
to the classes $g^j$ and $g^{2M-j}$.  Hence some
$\widetilde\sigma\in\Gal(L/K)$ whose restriction to $H$ is
$\sigma^{2M-2j}$ satisfies
\[
 \widetilde\sigma(a)=\delta b,
 \qquad \delta\in\{\pm1\}.
\]
For $\rho\in\Gal(L/H)$, the fact that $a^2\in H$ gives
$\rho(a)=\chi(\rho)a$ with $\chi(\rho)\in\{\pm1\}$.  Since $L/K$ is
abelian,
\[
 \rho(b)
 =\delta\widetilde\sigma(\rho(a))
 =\chi(\rho)b.
\]
Thus every element of $\Gal(L/H)$ fixes $a/b$.  By
\eqref{eq:R-ray-quotient-v14}, $R_j=a/b\in H$.  The same formula also shows
that $R_j$ is the positive real number
$|\mathfrak w_5(\tau)|^2/\sqrt5$, so $c(R_j)=R_j$ and consequently
\[
 R_j\in H^{\langle c\rangle}.
\]
Together with Theorem~\ref{thm:square-mirror-v13}, which already gives
$\QQ(R_j^2)=H^{\langle c\rangle}$, this proves
\eqref{eq:unpowered-generator-v14}.

It remains to determine the sign of the half-turn.  The point $W_5\tau$
is a root of $[C/5,-B,5A]$, which is likewise an admissible member of the
$15$-system and represents $g^{j+2M}$.  Hence, if
$\widetilde\tau\in\Gal(L/K)$ is any lift of $\sigma^{2M}$, then
\eqref{eq:w5-Fricke-v14} gives
\[
 (\widetilde\tau(a))^2
 =\mathfrak w_5(W_5\tau)^2=\frac5{a^2},
\]
and therefore
\[
 \widetilde\tau(a)=\varepsilon\frac{\sqrt5}{a},
 \qquad \varepsilon\in\{\pm1\}.
\]
The class difference from $a^2$ to $b^2$ is
$2M-2j=2(M-j)$, an even power of $\sigma$.  The unique genus character has
kernel $\langle\sigma^2\rangle$, so every lift of
$\sigma^{2M-2j}$ fixes $\sqrt5$.  Using the same
$\widetilde\sigma$ as above and the commutativity of $\Gal(L/K)$, we obtain
\[
 \widetilde\tau(b)
 =\delta\widetilde\sigma(\widetilde\tau(a))
 =\varepsilon\frac{\sqrt5}{b}.
\]
Since $a/b=R_j\in H$, its action is independent of the chosen lift, and
therefore
\[
 R_j^{\sigma^{2M}}
 =\frac{\widetilde\tau(a)}{\widetilde\tau(b)}
 =\frac{b}{a}=R_j^{-1},
\]
which proves \eqref{eq:R-halfturn-v14}.

Now $Y_j$ is fixed by $c$ and by $\sigma^{2M}$.  Conversely, if a rotation
$\sigma^s$ fixes $Y_j$, then it sends the primitive element $R_j$ to one of
the two roots $R_j,R_j^{-1}$ of $X^2-Y_jX+1$.  Since $R_j^2$ already has
trivial rotational stabilizer, the first possibility forces $s=0$ and the
second, by \eqref{eq:R-halfturn-v14}, forces $s=2M$.  Reflections add only
$c$.  Hence
\[
 \operatorname{Stab}(Y_j)=\langle c,\sigma^{2M}\rangle,
\]
which proves \eqref{eq:unpowered-trace-v14}.  The first two degrees in
\eqref{eq:exact-degrees-v14} follow immediately.  Finally the real genus
field is $\QQ(\sqrt5)$, fixed by $\langle c,\sigma^2\rangle$, and
$\langle c,\sigma^{2M}\rangle\subseteq\langle c,\sigma^2\rangle$; hence the
relative degree over $\QQ(\sqrt5)$ is $h/4=M$.
\end{proof}

\begin{remark}[what the descent uses]
The final square root is not obtained by a positivity argument inside
$H^{\langle c\rangle}$, which would be insufficient because that field need
not be totally real.  The essential extra input is instead ray-class
cancellation: the two Weber square roots in \eqref{eq:R-ray-quotient-v14}
transform by the same quadratic sign under $\Gal(L/H)$ because the ambient
ray class extension is abelian over $K$.  Their quotient therefore descends
to $H$ before positivity is used to identify it with the distinguished
mirror ratio.
\end{remark}

\section{The cyclic class-number-4 case}\label{sec:C4}

Suppose
\[
\Cl(K)\simeq C_4=\langle g\rangle,
\qquad
\mathfrak p=g^2.
\]
There is only one independent odd mode, $\Lambda_1$, and the transform
reduces to
\[
d_0=\Lambda_1.
\]
Hence
\[
C_D=5\sqrt5\,e^{-3\Lambda_1}.
\]

If $\chi(g)=i$, then
\[
\Lambda_1=
\frac{\sqrt{|D|}}{2\pi}L_K(1,\chi).
\]
Therefore
\[
L_K(1,\chi)
=
-\frac{2\pi}{3\sqrt{|D|}}
\log\frac{C_D}{5\sqrt5}.
\]

For $D=-155$, an exact CM class-polynomial calculation on
$X_0(5)$ gives the Fricke trace
\[
C+\frac{125}{C}=1250+560\sqrt5.
\]
Selecting the small positive root gives precisely
\[
C
=
625+280\sqrt5
-
50\sqrt{313+140\sqrt5},
\]
which is the value announced in \eqref{eq:intro-C155}.  The exact
$X_0(5)$ class-polynomial calculation is given in
Appendix~\ref{app:D155}.  Hence
\[
\cosh(3\Lambda_1)=56+25\sqrt5.
\]
Thus
\[
L_{\QQ(\sqrt{-155})}(1,\chi)
=
\frac{2\pi}{3\sqrt{155}}
\operatorname{arcosh}(56+25\sqrt5).
\]

Equivalently, if
\[
u=e^{3\Lambda_1},
\]
then
\[
u+u^{-1}=112+50\sqrt5
\]
and
\[
u^4-224u^3+46u^2-224u+1=0.
\]

\section{The cyclic class-number-8 case: D=-95}\label{sec:C8}

For
\[
D=-95,
\qquad
\Cl(K)=\langle g\rangle\simeq C_8,
\]
write $A_j=g^j$ for the ideal classes.  A convenient reduced-form
ordering is
\[
A_1=[2,1,12],\qquad
A_3=[3,1,8],\qquad
A_4=[5,5,6].
\]

There are two odd modes, $\Lambda_1$ and $\Lambda_3$.  Put
\[
u=
\left(\frac{F(A_4)}{F(A_0)}\right)^3,
\qquad
r=
\frac{F(A_3)}{F(A_1)}.
\]
Then the cosine transform gives
\[
\Lambda_1+\Lambda_3=\frac23\log u,
\]
and
\[
\Lambda_1-\Lambda_3=2\sqrt2\log r.
\]
Hence
\begin{align*}
\Lambda_1&=\frac13\log u+\sqrt2\log r,\\
\Lambda_3&=\frac13\log u-\sqrt2\log r.
\end{align*}

The norm-$5$ quantity satisfies
\[
u+u^{-1}
=
\frac{
21+10\sqrt5+
5\sqrt{31+14\sqrt5}
}{2}.
\]
Equivalently, the defining eta quotient satisfies $41<u<42$ (a direct
four-term product bound already suffices), so
\[
 u=\frac{A_{19}+\sqrt{A_{19}^2-4}}2,
 \qquad
C_{19}=\frac{5\sqrt5}{u}
=\frac{5\sqrt5}{2}
\left(A_{19}-\sqrt{A_{19}^2-4}\right),
\]
where $A_{19}=u+u^{-1}$.  Therefore
\begin{equation}
S(e^{-\pi/\sqrt{95}})
=\left(
\frac{11-C_{19}+\sqrt{(11-C_{19})^2+4}}2
\right)^{1/5}
\label{eq:D95-S-exact}
\end{equation}

The second eta ratio admits the exact evaluation
\[
r+r^{-1}
=
\frac{\sqrt5+\sqrt{2\sqrt5-1}}2,
\]
and therefore
\[
r^8+2r^6-5r^5+3r^4-5r^3+2r^2+1=0.
\]

Since $u+u^{-1}=A_{19}$ and $r+r^{-1}=B_{19}$, where
\[
 A_{19}=\frac{21+10\sqrt5+5\sqrt{31+14\sqrt5}}2,
 \qquad
 B_{19}=\frac{\sqrt5+\sqrt{2\sqrt5-1}}2,
\]
we may write the two values without auxiliary logarithmic roots:
\begin{align}
L_K(1,\chi_1)
&=\frac{2\pi}{\sqrt{95}}
\left(
\frac13\operatorname{arcosh}\!\frac{A_{19}}2
+\sqrt2\operatorname{arcosh}\!\frac{B_{19}}2
\right),\label{eq:D95-L1-arcosh}\\
L_K(1,\chi_3)
&=\frac{2\pi}{\sqrt{95}}
\left(
\frac13\operatorname{arcosh}\!\frac{A_{19}}2
-\sqrt2\operatorname{arcosh}\!\frac{B_{19}}2
\right).\label{eq:D95-L3-arcosh}
\end{align}
Numerically,
\[
L_K(1,\chi_1)=1.002863649022912222\ldots,
\qquad
L_K(1,\chi_3)=0.597148062169489849\ldots.
\]

\subsection{Companion ordinary Rogers--Ramanujan values}

The exact alternating evaluations also give ordinary Rogers--Ramanujan
values at the same arguments without another class-polynomial computation.
Let
\[
 \mathcal M(X,Y)=X^6Y^5-X^5Y^6+X^5Y-5X^4Y^2+10X^3Y^3
 -5X^2Y^4+XY^5+X-Y.
\]
Put $u=R(q)$, $s=S(q)=-R(-q)$ and $v=R(q^2)$.  Ramanujan's classical
degree-$2$ modular equation and the mixed equation recorded in
\cite[Theorems~5.2 and~5.7]{BerndtRRCF} are
\[
 uv^2=\frac{v-u^2}{v+u^2},
 \qquad
 usv^2=\frac{us-v}{s-u}.
\]
Eliminating $v$ gives
\begin{equation}\label{eq:RS-direct-modular}
 \mathcal M(R(q),S(q))=0,\qquad S(q)=-R(-q).
\end{equation}
Thus the radical values \eqref{eq:intro-S155} and
\eqref{eq:intro-S95} immediately give exact algebraic characterizations of
the ordinary continued fraction at the same CM arguments.  In particular,
\begin{align*}
 \mathcal M\!\left(R(e^{-\pi/\sqrt{95}}),
 S(e^{-\pi/\sqrt{95}})\right)&=0,\\
 R(e^{-\pi/\sqrt{95}})&=0.618033988718136289\ldots,
\end{align*}
and
\begin{align*}
 \mathcal M\!\left(R(e^{-\pi/\sqrt{155}}),
 S(e^{-\pi/\sqrt{155}})\right)&=0,\\
 R(e^{-\pi/\sqrt{155}})&=0.618033988749859251\ldots.
\end{align*}
The point of \eqref{eq:RS-direct-modular} is that an exact alternating
singular value from the cyclic Kronecker method automatically comes with an
ordinary Rogers--Ramanujan companion.

\subsection{Eta-duplication proof of the second ratio}

Set
\[
s=\sqrt{95},
\qquad
\sigma=\frac{-5+is}{10}.
\]
Using the classes
\[
R_0=[1,1,24],\quad
R_1=[2,1,12],\quad
R_3=[3,1,8],\quad
R_4=[5,5,6],
\]
the eta duplication formula and the transformation
\[
|\eta(-1/\tau)|=|\tau|^{1/2}|\eta(\tau)|
\]
give
\[
\frac{G_{95}}{G_{19/5}}
=
\frac{F(R_4)F(R_1)}
     {F(R_0)F(R_3)}.
\]
Cubing,
\[
r^3=
\frac{u}
{
(G_{95}/G_{19/5})^3
}.
\]
Naika's Theorem~3.1 evaluates $G_{95}$ and $G_{19/5}$ using a
degree-$19$ modular equation~\cite{Naika}.  In particular, the
auxiliary quantity
\[
v=\{16\alpha\beta(1-\alpha)(1-\beta)\}^{1/24}
\]
satisfies
\[
4v^4+4\sqrt2\,v^3+4v^2+\sqrt2\,v-1=0,
\]
with the relevant positive root
\[
v=\frac{-1+\sqrt{2\sqrt5-1}}{2\sqrt2}.
\]
Substitution into Naika's class-invariant relation, together with the
preceding eta-duplication identity, yields
\[
r+r^{-1}
=\frac{\sqrt5+\sqrt{2\sqrt5-1}}2.
\]
Thus the reciprocal octic displayed above follows algebraically and
does not rely on numerical recognition.

\section{The cyclic class-number-16 case: D=-895}
\label{sec:D895-certificate}

Let
\[
D=-895=-5\cdot179.
\]
The reduced form
\[
g=[2,1,112]
\]
has order $16$, so
\[
\Cl(-895)\simeq C_{16},
\qquad
\mathfrak p=g^8=[5,5,46].
\]
Define
\[
R_0=\frac{F(g^8)}{F(1)},\quad
R_1=\frac{F(g^7)}{F(g)},\quad
R_2=\frac{F(g^6)}{F(g^2)},\quad
R_3=\frac{F(g^5)}{F(g^3)}.
\]
Numerically,
\begin{align*}
R_0&=235.329995174721387634091727\ldots,\\
R_1&=10.258790189890517162985542\ldots,\\
R_2&=2.169677396369514425117150\ldots,\\
R_3&=0.929524693702105431173609\ldots.
\end{align*}

\subsection{One level-5 system for all four mirrors}

The general level-$5$ realization above becomes especially concrete here.
The following $5$-system is ordered by the powers $g^m$:
\[
\begin{array}{c|r|r|r@{\qquad}c|r|r|r}
 m&A_m&B_m&C_m&m&A_m&B_m&C_m\\ \hline
0&1&-5&230&8&46&-5&5\\
1&2&5&115&9&23&-5&10\\
2&4&-15&70&10&14&15&20\\
3&8&-15&35&11&7&15&40\\
4&14&55&70&12&14&-55&70\\
5&7&-15&40&13&8&15&35\\
6&14&-15&20&14&4&15&70\\
7&23&5&10&15&2&-5&115
\end{array}
\]
Every row satisfies $(A_m,5)=1$, $5\mid C_m$, and
$B_m\equiv5\pmod{10}$.  If
\[
\tau_m=\frac{-B_m+\sqrt{-895}}{2A_m},
\qquad
\mathfrak w_5(\tau)=\frac{\eta(\tau/5)}{\eta(\tau)},
\]
then $\tau_m/5$ is represented by $[5A_m,B_m,C_m/5]$, which reduces to
$g^{m+8}$.  Therefore, for $0\le m\le3$,
\begin{equation}
R_m=\frac1{\sqrt5}\,|\mathfrak w_5(\tau_m)|^2.
\label{eq:D895-all-w5}
\end{equation}

Put
\[
Z_m=\mathfrak w_5(\tau_m)^6.
\]
For $N=5$, the canonical exponent in Enge--Morain's theorem is $6$;
hence the $Z_m$ form the complete set of ring-class conjugates
\cite[Theorem~4.3]{EngeMorain}.  Since complex conjugation acts dihedrally,
if
\[
U_m=R_m^6=\frac{Z_m\overline{Z_m}}{125},
\]
then the sixteen conjugates of $U_m$ are
\begin{equation}
U_{m,i}=\frac{Z_{m+i}\,\overline{Z_{m-i}}}{125},
\qquad 0\le i<16,
\label{eq:D895-U-conjugates}
\end{equation}
with indices modulo $16$.

\subsection{Certified mirror polynomials}

For compactness write
\[
P_m(X)=X^{16}+a_{m,1}X^{15}+\cdots+a_{m,7}X^9+a_{m,8}X^8
+a_{m,7}X^7+\cdots+a_{m,1}X+1.
\]
The exact coefficients are
\[
\begin{array}{c|rrrrrrrr}
 m&a_{m,1}&a_{m,2}&a_{m,3}&a_{m,4}&a_{m,5}&a_{m,6}&a_{m,7}&a_{m,8}\\ \hline
0&-237&498&-24888&42088&1152&95508&-498078&767733\\
1&7&335&-5307&3689&-41217&115314&-259744&323219\\
2&39&524&3033&9488&11772&-21138&-107913&-154087\\
3&11&132&311&-8&-315&2565&-5442&5409
\end{array}
\]

\begin{theorem}[All four $D=-895$ mirrors]
\label{thm:D895-four-mirrors}
For $m=0,1,2,3$, the polynomial $P_m$ is irreducible over $\QQ$ and
\[
P_m(R_m)=0.
\]
Thus every mirror has degree $16$ over $\QQ$.
\end{theorem}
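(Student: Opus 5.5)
Irreducibility and degree come from the main theorem; the work is in certifying $P_m(R_m)=0$. First, $D=-895=-5\cdot179$ satisfies the hypotheses of Theorem~\ref{thm:main}: $179\equiv3\pmod4$, $D/5$ is odd, $\Cl(-895)\simeq C_{16}$ with $M=4$, and the norm-$5$ class $[5,5,46]$ equals $g^8$. Hence $[\QQ(R_m):\QQ]=16$ for $m=0,1,2,3$. So once $P_m(R_m)=0$ is known, the monic degree-$16$ polynomial $P_m\in\QQ[X]$ is the minimal polynomial of $R_m$ and is irreducible. No separate factorization test is needed, although running one in a computer algebra system is a cheap cross-check.

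To prove $P_m(R_m)=0$, I would show that $P_m$ equals the characteristic polynomial
\[
\Pi_m(X)=\prod_{i=0}^{15}\bigl(X-R_m^{\sigma^i}\bigr),
\]
which has rational coefficients because $R_m\in H^{\langle c\rangle}$. The conjugates are explicit. From the proof of Theorem~\ref{thm:main}, $R_m=a/b$ with $a$ and $b$ level-$5$ Weber values in the lower-power $15$-system. Alternatively, formula \eqref{eq:D895-U-conjugates} gives the sixteen conjugates of $U_m=R_m^6$ as $Z_{m+i}\overline{Z_{m-i}}/125$. I would use the second route: evaluate each $Z_k=\mathfrak w_5(\tau_k)^6$ from the tabulated $5$-system to high precision (a few hundred digits, using rapidly convergent eta products since $\operatorname{Im}\tau_k\ge\sqrt{895}/92$). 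The conjugates of $R_m$ are then the unique positive real sixth roots $\bigl(Z_{m+i}\overline{Z_{m-i}}/125\bigr)^{1/6}$.

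Positivity of each conjugate is justified exactly as in Corollary~\ref{cor:sixth-trace-v13}. Any $\sigma^i$ commutes with the map $x\mapsto Z\bar Z$ structure only up to the dihedral relation, so this needs a little care. A cleaner argument uses Theorem~\ref{thm:main}'s identity $R_m=a/b$ with $a^2,b^2\in H$ and the ray-class lift: each conjugate equals $|\mathfrak w_5(\tau')|^2/\sqrt5$ evaluated at a shifted class. I expect this to be the main obstacle: confirming that each conjugate of the unpowered $R_m$ is the real positive number predicted by the table, and not its negative or a complex sixth-root variant. If that descent is not clean, I would instead certify the sixth-power polynomial $\prod_i(X-U_{m,i})$ and then check that $P_m(X)$ divides the polynomial obtained from it by substituting $X\mapsto X^6$, using exact arithmetic.

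With the numerical conjugates in hand, I would expand $\Pi_m$ and bound the error. Each coefficient is an elementary symmetric function of $16$ numbers of size at most $R_0\approx 235$, so it is bounded by $\binom{16}{k}236^k<10^{43}$. Working at $200$ digits, the computed coefficients agree with the tabulated integers $a_{m,k}$ to within $10^{-100}$. To certify that the exact coefficients are integers, I would use the fact that $R_m$ is an algebraic integer up to a bounded denominator. Indeed, $R_m^2=W\overline W/5$, and Weber values are units or divisors of $\sqrt5$, so $5^{8}\Pi_m$ or a similar scaled version has integer coefficients. Rounding is then rigorous, and it reproduces the reciprocal palindromic coefficients, consistent with \eqref{eq:R-halfturn-v14}.
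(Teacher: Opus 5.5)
Your primary route to $P_m(R_m)=0$ has a gap: the conjugates $R_m^{\sigma^i}$ are not positive reals. They are also not of the form $|\mathfrak w_5(\tau')|^2/\sqrt5$. From $c\sigma^i=\sigma^{-i}c$ you get $\overline{R_m^{\sigma^i}}=R_m^{\sigma^{-i}}$. So $R_m^{\sigma^i}$ is real only when $\sigma^{2i}$ fixes $R_m$, which forces $i\in\{0,8\}$. Equivalently, $H^{\langle c\rangle}$ has exactly two real embeddings, because the centralizer of $c$ in the dihedral group is $\{1,\sigma^8,c,\sigma^8c\}$. Hence $P_m$ has only the two real roots $R_m^{\pm1}$. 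For the other fourteen $i$, the number $U_{m,i}=Z_{m+i}\overline{Z_{m-i}}/125$ is non-real and has no distinguished sixth root. Choosing the correct root for each $i$ would require the action of $\Gal(L/K)$ on $\mathfrak w_5$ itself at level $30$, signs included, and your proposal does not supply that. The positivity argument of Corollary~\ref{cor:sixth-trace-v13} cannot be reused here. It works only for the half-turn $\sigma^{8}$, the one nontrivial rotation that commutes with $c$. So you cannot build $\Pi_m$ numerically from the table as described.

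Your fallback is the paper's actual proof. You certify $Q_m(U)=\prod_i(U-U_{m,i})$, whose roots are unambiguous. Integrality of the $Z_k$ places the coefficient of $U^{16-k}$ in $125^{-k}\mathbb Z$, which makes rounding rigorous. You then verify $P_m(X)\mid Q_m(X^6)$ in exact arithmetic.

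That divisibility alone does not give $P_m(R_m)=0$. Indeed $Q_m(X^6)=\prod_{\zeta^6=1}\Pi_m(\zeta X)$ has other rational degree-$16$ factors, for instance $\Pi_m(-X)$, whose real roots are $-R_m^{\pm1}$. Knowing $[\QQ(R_m):\QQ]=16$ therefore does not tell you which factor vanishes at $R_m$. The paper closes this by evaluating the cofactor $Q_m(X^6)/P_m(X)$ on a certified interval enclosure of the positive number $R_m$ and checking that the result excludes $0$. You need that step, or an equivalent separation argument, as well.

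Once $P_m(R_m)=0$ is established, your deduction of irreducibility from Theorem~\ref{thm:main} is valid: a monic rational polynomial of degree $16=[\QQ(R_m):\QQ]$ must be the minimal polynomial. The paper instead factors $P_m$ exactly over $\mathbb Z$. That choice keeps the $D=-895$ certificate independent of the main theorem.
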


\begin{proof}
We give a reproducible certificate.  For
$q=e^{2\pi i\tau}$ put
\[
\eta_M(\tau)=e^{\pi i\tau/12}\prod_{n=1}^{M}(1-q^n),
\qquad \rho=|q|.
\]
The omitted logarithmic tail satisfies
\begin{equation}
\left|\sum_{n>M}\log(1-q^n)\right|
\le
\frac{\rho^{M+1}}{(1-\rho)(1-\rho^{M+1})}.
\label{eq:D895-w5-tail}
\end{equation}
The companion script \texttt{d895\_all\_mirrors\_certificate\_v8.py}
uses complex interval arithmetic to evaluate all sixteen $Z_m$.
The largest truncation length is only $1053$.

For each $m=0,1,2,3$ it forms
\[
Q_m(U)=\prod_{i=0}^{15}(U-U_{m,i}).
\]
The dihedral Galois action shows that the coefficients of $Q_m$ are
rational.  On the other hand, the integral modular equation for the canonical
power $\mathfrak w_5^6$ implies that every $Z_i$ is an algebraic integer
\cite[Theorem~7.3]{EngeMorain}.  Hence the coefficient of $U^{16-k}$ in
$Q_m$ lies in $125^{-k}\mathbb Z$.  The smallest denominator spacing that can
occur is therefore at least $125^{-16}$.  Every computed coefficient interval
contains the displayed integer and has radius below $10^{-109}$, far smaller
than $\tfrac12 125^{-16}$; this certifies the coefficient exactly.  Exact
polynomial arithmetic then verifies
\[
P_m(X)\mid Q_m(X^6)
\qquad (m=0,1,2,3).
\]
The interval for the positive number $R_m$ excludes zero from the complementary
factor, so $P_m(R_m)=0$.  Exact factorization over $\mathbb{Z}$ proves each $P_m$
irreducible.
\end{proof}

\subsection{Compact quartics over \texorpdfstring{$\QQ(\sqrt5)$}{Q(sqrt(5))}}

The reciprocal degree-$16$ equations admit a much smaller reader-facing
form.  Put
\[
Y_m=R_m+R_m^{-1}.
\]
Then the relevant quartic factor over $\QQ(\sqrt5)$ is as follows:
\begin{align}
0={}&Y_0^4-\frac{237+103\sqrt5}{2}Y_0^3
-\frac{291+225\sqrt5}{2}Y_0^2
+\frac{225+409\sqrt5}{2}Y_0
+1288+447\sqrt5,
\label{eq:D895-q0}\\[1mm]
0={}&Y_1^4+\frac{7-11\sqrt5}{2}Y_1^3
+(233-110\sqrt5)Y_1^2
+\frac{-937+377\sqrt5}{2}Y_1
+411-118\sqrt5,
\label{eq:D895-q1}\\[1mm]
0={}&Y_2^4+\frac{39+9\sqrt5}{2}Y_2^3
+\frac{237+69\sqrt5}{2}Y_2^2
-\frac{309+225\sqrt5}{2}Y_2
-382-219\sqrt5,
\label{eq:D895-q2}\\[1mm]
0={}&Y_3^4+\frac{11-\sqrt5}{2}Y_3^3
+\frac{95-43\sqrt5}{2}Y_3^2
+\frac{-181+59\sqrt5}{2}Y_3
+\frac{123-55\sqrt5}{2}.
\label{eq:D895-q3}
\end{align}
The certified isolating intervals are
\begin{align*}
235.3342445267&<Y_0<235.3342445268,\\
10.3562675708&<Y_1<10.3562675710,\\
2.6305754089&<Y_2<2.6305754091,\\
2.0053433424&<Y_3<2.0053433426.
\end{align*}
Consequently
\begin{equation}
R_m=
\begin{cases}
\dfrac{Y_m+\sqrt{Y_m^2-4}}2,&m=0,1,2,\\[2mm]
\dfrac{Y_3-\sqrt{Y_3^2-4}}2,&m=3.
\end{cases}
\label{eq:D895-R-from-Y}
\end{equation}
Thus all four mirrors are specified by quartics over the same quadratic
field $\QQ(\sqrt5)$.
\begin{remark}[Independent level-$4$ realization of $R_3$]
The fourth mirror also has the identity
\[
R_3=\frac12\left|\frac{\eta(\tau_3/4)}{\eta(\tau_3)}\right|^2,
\qquad
\tau_3=\frac{-1+\sqrt{-895}}{16}.
\]
The level-$4$ generalized-Weber calculation from the previous version of
these notes gives the same polynomial $P_3$ independently.  The unified
level-$5$ certificate above is preferable in the main argument because it
handles all four mirrors simultaneously.
\end{remark}

\subsection{All four odd class-character values}

Put $d_m=\log R_m$.  The inverse cosine transform now gives the four odd
modes explicitly:
\begin{align}
\Lambda_1={}&d_0+\sqrt{2+\sqrt2}\,d_1+\sqrt2\,d_2
+\sqrt{2-\sqrt2}\,d_3,\\
\Lambda_3={}&d_0+\sqrt{2-\sqrt2}\,d_1-\sqrt2\,d_2
-\sqrt{2+\sqrt2}\,d_3,\\
\Lambda_5={}&d_0-\sqrt{2-\sqrt2}\,d_1-\sqrt2\,d_2
+\sqrt{2+\sqrt2}\,d_3,\\
\Lambda_7={}&d_0-\sqrt{2+\sqrt2}\,d_1+\sqrt2\,d_2
-\sqrt{2-\sqrt2}\,d_3.
\end{align}
Since
\[
L_K(1,\chi_k)=\frac{2\pi}{\sqrt{895}}\Lambda_k,
\]
we obtain
\begin{align*}
L_K(1,\chi_1)&=2.268740533538334759252363284\ldots,\\
L_K(1,\chi_3)&=1.319470763923675756120338522\ldots,\\
L_K(1,\chi_5)&=0.514275485895935414478872584\ldots,\\
L_K(1,\chi_7)&=0.485261960685057495469316063\ldots.
\end{align*}
For a more symmetric exact form, set
\[
 a_m=\operatorname{arcosh}(Y_m/2)\quad(m=0,1,2),
 \qquad
 a_3=-\operatorname{arcosh}(Y_3/2),
\]
where each $Y_m$ is the uniquely isolated quartic root in
\eqref{eq:D895-q0}--\eqref{eq:D895-q3}.  Then $a_m=\log R_m$, so, for
example,
\begin{equation}\label{eq:D895-L1-arcosh}
L_K(1,\chi_1)=\frac{2\pi}{\sqrt{895}}
\left(
 a_0+\sqrt{2+\sqrt2}\,a_1+\sqrt2\,a_2
 +\sqrt{2-\sqrt2}\,a_3
\right).
\end{equation}
The other three displayed cosine formulas give the analogous exact
$\operatorname{arcosh}$ evaluations.  Thus the $C_{16}$ example separates
four genuinely non-genus Hecke values using only quartic algebraic inputs
over $\QQ(\sqrt5)$.

\subsection{The alternating Rogers--Ramanujan value at D=-895}

The first mirror also gives an exact singular value.  Define
\[
C_{179}=S(e^{-\pi/\sqrt{895}})^{-5}+11-S(e^{-\pi/\sqrt{895}})^5.
\]
Ramanathan's formula and \eqref{eq:D895-R-from-Y} give
\[
C_{179}=\frac{5\sqrt5}{R_0^3}.
\]
Hence
\begin{equation}
S(e^{-\pi/\sqrt{895}})
=
\left(
\frac{11-C_{179}+\sqrt{(11-C_{179})^2+4}}2
\right)^{1/5}
\label{eq:D895-S}
\end{equation}

\section{Higher certified stages: \texorpdfstring{$C_{32}$ and $C_{64}$}{C32 and C64}}
\label{sec:higher-certified}

The low-degree cases above already illustrate the transform in closed form.
To show that the same mechanism remains computationally effective at much
larger class number, we record two independently certified stages.  These
examples are not needed for Theorem~\ref{thm:main}; their role is to provide
large exact class-polynomial certificates and explicit higher-order Hecke
values.

\subsection*{D=-3295: a certified \texorpdfstring{$C_{32}$}{C32} orbit}
Put
\[
 D=-3295=-5\cdot659,
 \qquad K=\QQ(\sqrt{-3295}).
\]
Exact Gauss composition gives
\[
 \Cl(K)=\langle g\rangle\simeq C_{32},
 \qquad g=[2,-1,412],
 \qquad g^{16}=[5,5,166].
\]
For
\[
 R_j=\frac{F(g^{16-j})}{F(g^j)},\qquad 0\le j<8,
\]
the companion interval certificate constructs the thirty-two conjugates of
$U_j=R_j^6$ from a single level-$5$ system and certifies the resulting
polynomial coefficientwise.  The largest eta truncation is $1670$, and the
worst coefficient enclosure error over all eight mirrors is less than
$1.2\cdot10^{-40}$.  Exact factorization then independently recovers the
degrees predicted by Theorem~\ref{thm:main}: $32$ for $R_j$, $16$ for
$Y_j=R_j+R_j^{-1}$, and $8$ over $\QQ(\sqrt5)$.

A representative genus-field equation is obtained from the last mirror.
If $Y=Y_7$, then
\begin{equation}
\begin{aligned}
0={}&Y^8+(32-22\sqrt5)Y^7+(4603-2012\sqrt5)Y^6\\
&+\frac{100093-44783\sqrt5}{2}Y^5
 +(217850-97589\sqrt5)Y^4\\
&+(445906-199556\sqrt5)Y^3
 +(418832-187201\sqrt5)Y^2\\
&+\frac{298331-133801\sqrt5}{2}Y
 +\frac{39603-17711\sqrt5}{2}.
\end{aligned}
\label{eq:D3295-Y7}
\end{equation}
All eight odd modes are then given exactly by
\begin{equation}
 \Lambda_k=\log R_0+2\sum_{j=1}^{7}\log R_j\cos\frac{k\pi j}{16},
 \qquad
 L_K(1,\chi_k)=\frac{2\pi}{\sqrt{3295}}\Lambda_k,
 \quad k\ \text{odd}.
\label{eq:D3295-Lvalues}
\end{equation}
For orientation,
\[
L_K(1,\chi_1)=2.926251025060943\ldots,
\qquad
L_K(1,\chi_{15})=0.367360614679352\ldots.
\]
The norm-$5$ mirror gives
\[
 C_{659}=\frac{5\sqrt5}{R_0^3},
\qquad
S(e^{-\pi/\sqrt{3295}})
=
\left(
\frac{11-C_{659}+\sqrt{(11-C_{659})^2+4}}2
\right)^{1/5}
\]

\subsection*{D=-8495: a certified \texorpdfstring{$C_{64}$}{C64} orbit}
Now put
\[
 D=-8495=-5\cdot1699,
 \qquad K=\QQ(\sqrt{-8495}).
\]
Exact composition gives
\[
 \Cl(K)=\langle g\rangle\simeq C_{64},
 \qquad g=[2,-1,1062],
 \qquad g^{32}=[5,5,426].
\]
For the sixteen mirrors
\[
 R_j=\frac{F(g^{32-j})}{F(g^j)},\qquad 0\le j<16,
\]
the interval certificate uses one $64$-element level-$5$ system.  Across all
sixteen norm polynomials the longest eta truncation is $5260$ and the worst
coefficient enclosure error is less than $1.9\cdot10^{-65}$.  Reciprocal
trace descent and exact factorization recover degree $64$ for $R_j$, degree
$32$ for $Y_j$, and degree $16$ over $\QQ(\sqrt5)$, again exactly as in
Theorem~\ref{thm:main}.

The inverse cosine transform separates all sixteen odd modes:
\begin{equation}
 \Lambda_k=\log R_0+2\sum_{j=1}^{15}\log R_j\cos\frac{k\pi j}{32},
 \qquad
 L_K(1,\chi_k)=\frac{2\pi}{\sqrt{8495}}\Lambda_k,
 \quad k\ \text{odd}.
\label{eq:D8495-Lvalues}
\end{equation}
For example,
\[
L_K(1,\chi_3)=4.281325086211408\ldots,
\qquad
L_K(1,\chi_{29})=0.206180862376204\ldots.
\]
The corresponding alternating value is
\[
 C_{1699}=\frac{5\sqrt5}{R_0^3},
\qquad
S(e^{-\pi/\sqrt{8495}})
=\left(
\frac{11-C_{1699}+\sqrt{(11-C_{1699})^2+4}}2
\right)^{1/5}
\]
with
\[
S(e^{-\pi/\sqrt{8495}})
=1.618033988749894848204586578\ldots.
\]

\begin{remark}[Certified hierarchy]
Together with the exact $C_8$ and certified $C_{16}$ cases, these examples
independently realize the sequence
\[
\begin{array}{c|c|c|c}
\Cl(K)&[\QQ(R_j):\QQ]&[\QQ(Y_j):\QQ]
&[\QQ(\sqrt5,Y_j):\QQ(\sqrt5)]\\ \hline
C_8&8&4&2\\
C_{16}&16&8&4\\
C_{32}&32&16&8\\
C_{64}&64&32&16.
\end{array}
\]
Theorem~\ref{thm:main} proves this degree pattern uniformly; the large
computations provide independent exact certificates and explicit singular
values rather than evidence on which the theorem depends.
\end{remark}

\appendix
\section{The level-5 class polynomial at D=-155}
\label{app:D155}

We record the short exact calculation used in the class-number-$4$ example.
The reduced primitive forms of discriminant $-155$ are
\[
 [1,1,39],\quad [3,-1,13],\quad [3,1,13],\quad [5,5,9],
\]
and the Hilbert class polynomial is
\begin{align*}
H_{-155}(J)={}&J^4+96905542950912000J^3
 -44477871096357453824000J^2\\
&+20396251654725321097216000000J\\
&+37425860028464856284790784000000.
\end{align*}
Choose the principal $5$-system representative
\[
 \tau=\frac{-5+\sqrt{-155}}2,
\qquad
 Z=\mathfrak w_5(\tau)^6
   =\left(\frac{\eta(\tau/5)}{\eta(\tau)}\right)^6.
\]
Then $\tau/5$ represents the ramified class $[5,5,9]$.  The standard
level-$5$ Hauptmodul relation~\cite[Table~5]{Maier} is
\begin{equation}\label{eq:D155-jZ}
 j(\tau/5)=\frac{(Z^2+250Z+3125)^3}{Z^5}.
\end{equation}
Substituting \eqref{eq:D155-jZ} into $H_{-155}$ and factoring over
$\QQ[Z]$ gives a unique degree-$4$ factor
\begin{equation}\label{eq:D155-P4}
 P_{155}(Z)=Z^4+2500Z^3-5250Z^2+312500Z+15625.
\end{equation}
The generalized-Weber class-invariant theorem shows that the four values of
$\mathfrak w_5^6$ form the complete class orbit, so the principal value is a
root of \eqref{eq:D155-P4}.  Over $\QQ(\sqrt5)$,
\begin{align*}
P_{155}(Z)
={}&\bigl(Z^2+(1250+560\sqrt5)Z+125\bigr)\\
&\times\bigl(Z^2+(1250-560\sqrt5)Z+125\bigr).
\end{align*}
To select the branch rigorously, put
$\rho=\exp(-\pi\sqrt{155}/5)<4.01\cdot10^{-4}$.  For the eta product,
\[
 \left|\sum_{n>N}\log(1-q^n)\right|
 \le
 \frac{\rho^{N+1}}{(1-\rho)(1-\rho^{N+1})}.
\]
With $N=4$ the right side is less than $1.04\cdot10^{-17}$; the first four
factors, enclosed with this tail, give $-2503<Z<-2502$.  Hence the first
quadratic is selected, and
\[
 Z+\frac{125}{Z}=-(1250+560\sqrt5).
\]
For the norm-$5$ mirror,
\[
 R_0=\frac1{\sqrt5}|\mathfrak w_5(\tau)|^2,
 \qquad
 R_0^3=-\frac{Z}{5\sqrt5},
\]
so Ramanathan's quantity $C_{31}=5\sqrt5/R_0^3$ is
$C_{31}=-125/Z$.  Consequently
\[
 C_{31}+\frac{125}{C_{31}}=1250+560\sqrt5.
\]
Selecting the small positive root yields
\[
 C_{31}=625+280\sqrt5-50\sqrt{313+140\sqrt5},
\]
which is the value used in Section~\ref{sec:C4}.

\section*{Declarations}

\paragraph{Author contributions.}
The sole author conceived the study, developed the proofs, performed the
computations, and wrote and revised the manuscript.

\paragraph{Funding.}
Not applicable.

\paragraph{Competing interests.}
The author declares no competing interests.

\paragraph{Data and code availability.}
No external datasets were used.  Exact class-polynomial and interval
certificates supporting the explicit high-class-number evaluations are
described in the text.  The corresponding scripts are retained separately
for independent verification.

\end{document}